\newcommand{\commentout}[1]{}
\def \Rset {{\mathbb R}}
\def \Zset {{\mathbb Z}}
\def \Nset {{\mathbb N}}
\newcommand{\be}{\begin{equation}}
\newcommand{\ee}{\end{equation}}
\newcommand{\ba}{\begin{eqnarray}}
\newcommand{\ea}{\end{eqnarray}}
\newcommand{\bi}{\begin{itemize}}
\newcommand{\ei}{\end{itemize}}
\newcommand{\br}{\begin{eqnarray}}
\newcommand{\er}{\end{eqnarray}}
\newcommand{\Div}[1]{\text{div}\left(#1\right)}
\newtheorem{theo}{Theorem}[section]
\newtheorem{defin}{Definition}[section]
\newtheorem{lem}{Lemma}[section]
\newtheorem{cor}{Corollary}[section]
\newtheorem{rmk}{Remark}[section]
\numberwithin{equation}{section}
\begin{document}
\title[Bifurcation of homogenization and nonhomogenization]{Bifurcation of homogenization and nonhomogenization of  the curvature G-equation with shear flows}

\author{Hiroyoshi Mitake}
\address[H. Mitake]{
	Graduate School of Mathematical Sciences, 
	University of Tokyo 
	3-8-1 Komaba, Meguro-ku, Tokyo, 153-8914, Japan}
\email{mitake@g.ecc.u-tokyo.ac.jp}

\author{Connor Mooney}
\address[C. Mooney]{
Department of Mathematics, 
University of California at Irvine, 
California 92697, USA}
\email{mooneycr@math.uci.edu}

\author{Hung V. Tran}
\address[H. V. Tran]
{
Department of Mathematics, 
University of Wisconsin Madison, Van Vleck Hall, 480 Lincoln Drive, Madison, Wisconsin 53706, USA}
\email{hung@math.wisc.edu}

\author{Jack Xin}
\address[J. Xin]
{
Department of Mathematics, 
University of California at Irvine, 
California 92697, USA}
\email{jack.xin@uci.edu}

\author{Yifeng Yu}
\address[Y. Yu]
{
Department of Mathematics, 
University of California at Irvine, 
California 92697, USA}
\email{yifengy@uci.edu}

\thanks{HM was partially supported by the JSPS grants: KAKENHI \#22K03382, \#21H04431, \#20H01816, \#19K03580, \#19H00639.
CM was partly supported by Alfred P. Sloan Research Fellowship, NSF CAREER grant DMS-2143668.
HT was supported in part by NSF CAREER grant DMS-1843320 and a Vilas Faculty Early-Career Investigator Award.
JX was partly supported by NSF grants DMS-1952644, DMS-2309520 .
 YY was partly supported by NSF grant DMS-2000191.}

\date{}

\keywords{Level-set curvature G-equation; shear flows; effective burning velocity; homogenization; bifurcation.} 
\subjclass[2010]{
	35B40, %Asymptotic behavior of solutions, 
	49L25, %Viscosity solutions
	53E10, %Flows related to mean curvature
	35B45, %A priori estimates in context of PDEs
	35K20, %Initial-boundary value problems for second-order parabolic equations
	35K93, %Quasilinear parabolic equations with mean curvature operator
}

\begin{abstract}
The level-set curvature G-equation, a well-known model in turbulent combustion, has the following form
$$
G_t + \left(1-d\, \Div{\frac{DG}{|DG|}}\right)_+|DG|+V(X)\cdot DG=0.
$$
Here the cutoff  correction $()_+$ is imposed to  avoid non-physical negative local burning velocity.   
The existence of the effective burning velocity has been established for a large class of physically relevant incompressible flows $V$ in two dimensions \cite{GLXY} via game theory dynamics.    
In this paper, we show that the effective burning velocity associated with shear flows in dimensions three or higher ceases to exist when the flow intensity surpasses a bifurcation point.  The characterization of the bifurcation point  in three dimensions is closely related to the regularity theory of two-dimensional minimal surface type equations due to \cite{S1}.  
As a consequence,  a bifurcation also exists for  the validity of  full homogenization of  the curvature G-equation associated with shear flows.

\end{abstract}

\maketitle

\section{Introduction}

\subsection{Overview}
The level-set curvature G-equation is a well-known model in turbulent combustion \cite{P2000, W1985}:
\be
G_t + \left(1- d\,  {\rm div}\left({DG\over |DG|}\right)\right)_{+}|DG|+V(X)\cdot DG=0.\label{ge1}
\ee
Solutions to \eqref{ge1} are interpreted in the viscosity sense (see \cite{CL1992, GLXY, G} for instance).
The zero level set  $\{G(X,t)=0\}$ represents the flame front at time $t$. 
The burnt and unburnt regions are $\{G(X,t) < 0\}$ and $\{G(X,t) > 0\}$, respectively. 
See the left picture in  Figure 1 below.  
Here, $d>0$ is the Markstein number,   $X\in \Rset^{n+1}$, and $V:\Rset^{n+1}\to \Rset^{n+1}$ is  the velocity of the ambient fluid.  
The local burning velocity  (laminar flame speed) is given by 
$$
s_l=(1- d\; \kappa)_{+},
$$
where the mean curvature term $\kappa= {\rm div}\left({DG\over |DG|}\right)$ was first introduced in \cite{Mar} to approximate the dependence of local burning velocity on the variance of temperature along the flame front.  
Intuitively, if the flame front bends  toward the cold region (unburned area, point C in the right picture of Figure 1 below),  the flame propagation slows down. 
If the flame front bends toward the hot spot (burned area, point B in right picture of Figure 1 below), it burns faster. 
The positive  cutoff part $(a)_{+}=\max\{a,0\}$ for $a\in \Rset$ is imposed to avoid negative laminar flame speed since materials 
cannot be ``unburned".   
To maintain physical validity, it is necessary to add the cutoff  correction since,  mathematically,  solutions of \eqref{ge1} might develop large positive curvature as time evolves. Meanwhile,   the cutoff makes the equation more degenerate.  See \cite{ZR2007} and  \cite[Section 6.4]{OF2002} for numerical computations of the curvature G-equations (with or without the cutoff, respectively).

\begin{center}\label{curv3}
\includegraphics[scale=0.45]{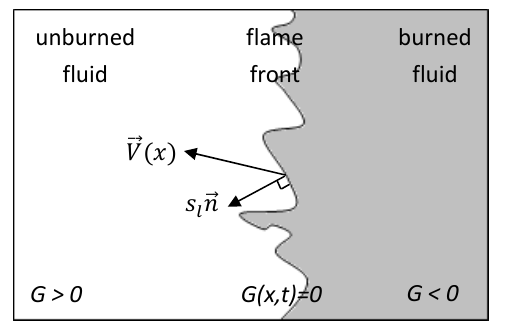} \hspace{3cm}
\includegraphics[scale=0.45]{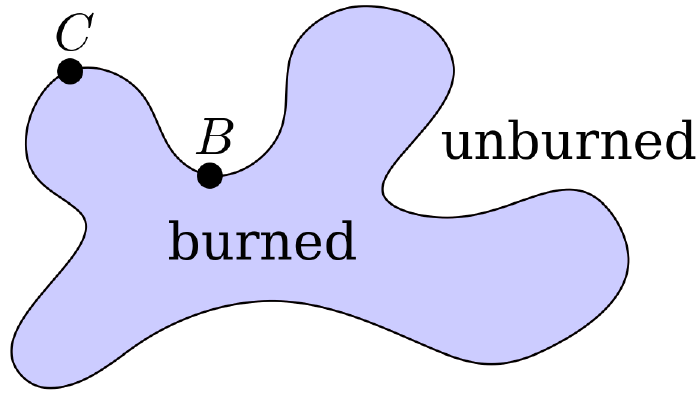}
\captionof{figure}{Left. Level-set formulation \qquad \qquad  Right. Curvature effect}
\end{center}

A significant problem in the combustion literature is to understand how the curvature term impacts the prediction of the burning velocity \cite{Ron}. 
To study this mathematically, the first step is to establish the existence of the effective burning velocity by averaging the geometry and small scales in the flow.  
More precisely,  given a direction $P\in \Rset^{n+1}$, let $G(X,t)\in C(\Rset^{n+1}\times [0,\infty))$ be the solution to the following equation
\be
\begin{cases}
G_t + \left(1- d\,  {\rm div}\left({DG\over |DG|}\right)\right)_{+}|DG|+V(X)\cdot DG=0,\label{ge3}\\[3mm]
G(X,0)=P\cdot X. 
\end{cases}
\ee
The goal then is to determine whether  there exists a constant $\overline H_{+}(P,d)$ such that
\be\label{time-limit}
-\lim_{t\to +\infty}{G(X,t)-P\cdot X\over t}=\overline H_{+}(P,d) \quad \text{for all $X\in \Rset^{n+1}$}.
\ee
The constant limit $\overline H_+(P,d)$,  if exists, can be viewed as the effective burning velocity (or called the turbulent burning velocity in combustion literature) along direction $P$ under the curvature G-equation model.  

The existence of   \eqref{time-limit}  for all $P\in \Rset^{n+1}$ implies the  strong homogenization of the curvature G-equation in our setting.  See Theorem \ref{theo:hom} for the precise statement. On the other hand,  if \eqref{time-limit} does not hold, then the  strong homogenization for \eqref{ge3} fails.  
Precisely speaking,   suppose that there exist $X_1, X_2\in \Rset^{n+1}$ such that 
 $$
\liminf_{t\to +\infty}{G(X_1,t)-P\cdot X_1 \over t}<\limsup_{t\to +\infty}{G(X_2,t)-P\cdot X_2 \over t},
$$ 
then by the $\Zset^{n+1}$ periodicity of $X\mapsto G(X,t)-P\cdot X$, it is easy to show that for any $(X,t)\in \Rset^{n+1}\times (0,\infty)$, 
$$
\liminf_{(Y,s,\epsilon)\to (X,t,0)}G_\epsilon (Y,s)<\limsup_{(Y,s,\epsilon)\to (X,t,0)}G_\epsilon (Y,s)
$$
for $G_{\epsilon}(X,t)=\epsilon G({X\over \epsilon}, {t\over \epsilon})$, which satisfies equation \eqref{ge1E} with $G_\epsilon(X,0)=P\cdot X$. 

 For the curvature G-equation, one of  the main difficulties to establish \eqref{time-limit}   is the lack of coercivity with respect to the gradient variable. 
 The existence of $\overline H_+(P,d)$ has been established in \cite{GLXY}  via the Lagrangian approach for a large class of physically meaningful two dimensional incompressible  periodic flows $V$.  
 One of  the most interesting examples is  the cellular flow appearing frequently in mathematics and physics literature \cite{CG}, e.g.,  for $X=(x_1,x_2)$,
  $$
 V(X)=(-H_{x_2}, H_{x_1}) \quad \text{ with $H(X)=A\sin x_1\sin x_2$}.
 $$
 Here, the positive parameter $A$ stands for the flow intensity. 
 
 The proof  in \cite{GLXY}  relies on the analysis of a game theory interpretation of curvature type operators in two dimensions  \cite{KS1}.  
 It remains an interesting question whether the existence also holds for three dimensional incompressible flows.  
 Surprisingly, this is not the case even for shear flows, a simple class of incompressible flows appearing in apparatus such as slot burners.  Meanwhile, the limit \eqref{time-limit} is easy to establish for two dimensional shear flows (see Remark \ref{rmk:1d}).

\subsection{Assumptions and main results}
We only focus on the shear flow setting in dimensions three or higher in this paper. 
For $n\geq 2$,  let $f:\Rset^n\to \Rset$  be a Lipschitz continuous $\Zset^n$-periodic function satisfying 
\be\label{c1}
 \{x\in \Rset^n\,:\, f(x)=\max_{\Rset^n}f\}=\Zset^n \quad \text{and} \quad \{x\in \Rset^n\,:\, f(x)=\min_{\Rset^n}f\}=\Zset^n+\vec{q}
\ee
for some $\vec{q}\in \Rset^n$.  Here we do not pursue the optimal assumptions on  $f$.  
Consider the $(n+1)$-dimensional shear flow
$$
V(X)=(0,0, \ldots, 0, Af(x)),
$$
where $X=(x,x_{n+1})\in \Rset^{n+1}$, $x\in \Rset^n$ and the constant $ A\geq 0$ is the flow intensity.  
Note that $n=2$ is the only case that has a clear physical meaning.   
Then the mean curvature type equation \eqref{ge3} in $(n+1)$ dimensions  is reduced to an $n$-dimensional minimal surface type equation as follows. 
Let $G(X,t)=v(x,t)+P\cdot X$ for $X$ and $P=(p,p_{n+1})\in \Rset^{n+1}$ subject to $p_{n+1}\not=0$. 
Then, $v$ satisfies
\be\label{ge3n}
\begin{cases}
v_t+\left(1-d\,{\rm div}{\frac{p+Dv}{\sqrt{p_{n+1}^2+|p+Dv|^2}}}\right)_+\sqrt{p_{n+1}^2+|p+Dv|^2}+A p_{n+1} f(x)=0,\\[3mm]
v(x,0)=0.
\end{cases}
\ee
Note that if $p_{n+1}=0$, the above equation has the trivial solution $v(x,t)=-|P|t$.  The usual comparison principle implies that, for $t\geq 0$,
\be\label{gradient-bound}
\|Dv(\cdot,t)\|_{L^{\infty}(\Rset^n)}\leq A |p_{n+1}|\|Df\|_{L^{\infty}(\Rset^n)}t.
\ee

\begin{defin}\label{defin:set} For fixed $P\in \Rset^{n+1}$,  denote by $S_H=S_H(P)$ the collection of $A\geq 0$ such that  there exists a constant $\overline H_+(P,d,A)$ satisfying that
\be\label{long-time-limit}
-\lim_{t\to +\infty}{v(x,t)\over t}=\overline H_+(P,d,A) \quad \text{for all  $x\in \Rset^n$}.
\ee
\end{defin}
Note that if $p_{n+1}=0$, then $S_H(P)=[0,\infty)$ and $\overline H_+(P,d,A)=|P|$.  Owing to \eqref{gradient-bound}, the convergence (\ref{long-time-limit}), if holds, must be uniform convergence. 
 Write
$$
F(P)=\max_{X\in \Rset^{n+1}}P\cdot V(X)=\max_{x\in \Rset^n}(p_{n+1}f(x)),
$$
which is  the maximum driven force by the flow along direction $P$. 
Thanks to \eqref{c1},  the maximum of $p_{n+1}f(x)$ is attained on $\Zset^{n}$ if $p_{n+1}>0$ and is attained on $\Zset^n+\vec{q}$ if $p_{n+1}<0$.  

Our main result says that when the flow intensity $A$  surpasses a bifurcation point,  the effective burning velocity $\overline H_+(P,d, A)$ does not exist.  

\begin{theo}\label{theo:main1}   
Assume $n\geq 2$. For fixed $d>0$ and a vector $P\in \Rset^{n+1}$ with $p_{n+1}  \neq 0$,  there exists a finite number $A_0=A_0(P) \geq {1\over 2\max_{\Rset^n}|f|}$ such that 
$$
S_H(P)=[0,A_0].
$$
Moreover, we have the following several properties.
\begin{itemize}
\item[(1)]  As a function of $A$, $\overline H_+(P,d,A) - F(P)A:[0,A_0]\to [0,|P|]$ is  Lipschitz continuous  on $[0,A_0]$ subject to $\overline H_+(P,d,0)=|P|$ and $\overline H_+(P,d,A_0)=F(P)A_0$;

\item[(2)]  There exists $A_1=A_1(P)\in(0, A_0]$  that is continuous with respect to $d$, $P$ and $f$ such that  $\overline H_+(P,d,A)- F(P)A$ is strictly decreasing on $[0,A_1]$ and $\overline H_+(P,d,A)=F(P)A$ for $A\in [A_1, A_0]$.  In addition,  for $A\in [0,A_1]$,  the corresponding cell problem \eqref{cell-cutoff-2} has a $C^{2,\alpha}$ $\Zset^n$-periodic  solution for any $\alpha\in (0,1)$.

\item[(3)] When $n=2$ (the physical case),   $A_0=A_1$. In particular, this implies that the transition value $A_0(P)$ is stable with respect to $d$,  $P$ and $f$. 
\end{itemize}

\end{theo}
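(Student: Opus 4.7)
My plan is to reduce the existence of the long-time limit \eqref{long-time-limit} to the solvability of the stationary cell problem
\begin{equation*}
\left(1 - d\, \text{div}\frac{p+Dw}{\sqrt{p_{n+1}^2+|p+Dw|^2}}\right)_+ \sqrt{p_{n+1}^2+|p+Dw|^2} + Ap_{n+1}f(x) = \overline{H}_+
\end{equation*}
on $\Tset^n$, so that $A\in S_H$ corresponds to the existence of a continuous $\Zset^n$-periodic viscosity sub/supersolution pair sharing the common constant $\overline{H}_+$. As a preliminary I would derive the two-sided bound $0\leq \overline{H}_+(P,d,A)-F(P)A\leq |P|$: the lower estimate uses that the nonnegative curvature contribution yields $v_t\leq -Ap_{n+1}f(x)$, so $-v(x,t)/t\geq Ap_{n+1}f(x)$ pointwise and in particular at maximizers of $Ap_{n+1}f$, while the upper estimate comes from comparing \eqref{ge3n} with the solution $v\equiv -|P|t$ of the $A=0$ equation. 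In particular $\overline{H}_+(P,d,0)=|P|$.

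For the lower bound $A_0\geq 1/(2\max_{\Rset^n}|f|)$, I would produce a classical $C^{2,\alpha}$ corrector by perturbation from the $A=0$ pair $(w,\overline{H}_+)=(0,|P|)$. When $A\max|f|$ is small enough that the candidate $\overline{H}_+$ stays uniformly strictly above $Ap_{n+1}f(x)$, the cutoff $()_+$ is inactive and the cell problem reduces to a uniformly elliptic quasilinear mean-curvature-type equation; its solvability then follows from the vanishing-discount method together with standard $C^{2,\alpha}$ regularity for such equations. The specific threshold $1/(2\max|f|)$ is engineered so that the a priori bound $\overline{H}_+-Ap_{n+1}f\geq |P|/2$ is preserved along the continuation, keeping the cutoff inactive.

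Next I would prove $S_H=[0,A_0]$ with $A_0<\infty$ and establish items (1)-(2). Monotonicity of $A\mapsto \overline{H}_+(P,d,A)-F(P)A$ follows from a comparison argument applied to the time-dependent solutions of \eqref{ge3n} after subtracting $F(P)At$, and the same argument produces the Lipschitz continuity claimed in (1). Finiteness of $A_0$ would be proved by contradiction: for $A$ sufficiently large, one would exhibit solutions of \eqref{ge3n} whose long-time growth rate genuinely differs at the maximizer and at another point of $f$, ruling out uniform convergence of $-v/t$. Property (2) is then obtained by setting $A_1:=\sup\{A\in S_H:\overline{H}_+(P,d,A)>F(P)A\}$: on $[0,A_1)$, strict positivity of $\overline{H}_+-Ap_{n+1}f$ makes the cell problem non-degenerate elliptic and classical regularity yields a $C^{2,\alpha}$ corrector, while on $[A_1,A_0]$ the identity $\overline{H}_+=F(P)A$ is forced by monotonicity plus continuity. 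Continuity of $A_1$ in $d$, $P$, $f$ follows from continuous dependence of the non-degenerate cell problem on its parameters.

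The main obstacle is item (3), the equality $A_0=A_1$ in dimension $n=2$. Since $A_1\leq A_0$ is automatic, the content is $A_0\leq A_1$: classical correctors must persist all the way up to the bifurcation. I would attempt a compactness argument along a sequence $A_k\uparrow A_1$, and the crux is establishing uniform $C^1$, and then $C^{2,\alpha}$, estimates on the correctors $w_{A_k}$ as the forcing $\overline{H}_+(A_k)-A_kp_{n+1}f$ degenerates at some point. In $n=2$ the required gradient bound is supplied by Simon's interior estimate for two-dimensional minimal surface type equations \cite{S1}, whose hypotheses depend only on oscillation and on the positive infimum of the forcing; in $n\geq 3$ such a dimension-free estimate genuinely fails, consistent with the possibility of a gap $A_1<A_0$ in higher dimensions. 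The delicate step will be verifying that the specific form of \eqref{ge3n} falls within Simon's framework, and that the positive-part nonlinearity does not spoil compactness as the forcing degenerates.
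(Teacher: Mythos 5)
Your skeleton for the routine parts (the reduction to the discounted/cell problem, the bounds $0\le \overline H_+ - F(P)A\le |P|$, the lower bound on $A_0$ by keeping the cutoff inactive, and the description of $A_1$ via the non-cutoff effective Hamiltonian) matches the paper's route. But the central claim of the theorem --- that $A_0$ is \emph{finite}, i.e.\ that homogenization genuinely fails for large $A$ --- is left as an assertion: you say one ``would exhibit solutions whose long-time growth rate genuinely differs at the maximizer and at another point,'' with no mechanism. This is the heart of the paper and it is not a soft comparison argument. The actual proof assumes $A\in S_H$ for $A$ large, normalizes $\min v_\lambda=0$ outside $B_{r_0}(0)+\Zset^n$, and uses a paraboloid barrier to force $v_\lambda\ge 2$ on $\partial B_{2r_0}(0)+\Zset^n$; it then takes the \emph{rotationally symmetrized lower envelope} $w(x)=\min_{|y|=|x|}u_\lambda(y)$, which is still a supersolution, reduces to a radial ODE supersolution $g(r)$ that must climb by more than $1$ across the thin annulus $r_0<r<2r_0$, and reaches a contradiction by touching $g$ with a steep parabola (Lemma \ref{lem:barrier}) so that the term $-d(n-1)g'/r$ dominates. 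Note that this is exactly where the hypotheses you never invoke enter: assumption \eqref{c1} (the maxima of $f$ are the isolated points $\Zset^n$) and $n\ge 2$ (the factor $(n-1)/\bar r$ produces the contradiction; for $n=1$ homogenization always holds, cf.\ Remark \ref{rmk:1d}). Without some version of this argument your proof of $S_H=[0,A_0]$ with $A_0<\infty$ does not exist. A smaller gap of the same flavor: strict decrease of $\overline H-F(P)A$ in item (2) needs the strong maximum principle applied to the $A$-differentiated cell problem (Lemma \ref{lem:prop}(3)), not just comparison, which only gives monotonicity.

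For item (3) you correctly name Simon's two-dimensional estimate as the key external input, but your architecture is off. Compactness along $A_k\uparrow A_1$ cannot prove $A_0\le A_1$, because the issue is what happens for a fixed $A\in(A_1,A_0]$, not a limit from below. The paper instead proves: if $A\in S_H$ and $\overline H_+(d,A)=0$ (which holds throughout $[A_1,A_0]$), then necessarily $\overline H(d,A)=0$, and strict monotonicity of $\overline H-F(P)A$ then pins $A=A_1$. Establishing $\overline H(d,A)=0$ requires (i) Simon's Harnack inequality to get $\lambda$-uniform gradient bounds on $D_\delta=\{f<-\delta\}$ only, (ii) passing to a limit $u$ that solves the non-cutoff equation on $D_0$ with \emph{isolated singularities} on $\Zset^2$, (iii) a lower bound near those singularities (Corollary \ref{cor:bound}, proved by the same barrier/symmetrization device as the non-homogenization argument), and (iv) a removability-of-singularities statement in the viscosity supersolution sense (Lemma \ref{lem:supersolution}, via an integration-by-parts/monotonicity argument for the minimal surface operator), after which one compares $u$ with the smooth non-cutoff corrector at a minimum point. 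Your sketch omits steps (ii)--(iv) entirely, and these --- not the uniform $C^{2,\alpha}$ estimates you propose, which fail precisely because the forcing degenerates at $\Zset^2$ --- are what make the two-dimensional case work.
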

\begin{center}\label{curv4}
\includegraphics[scale=0.6]{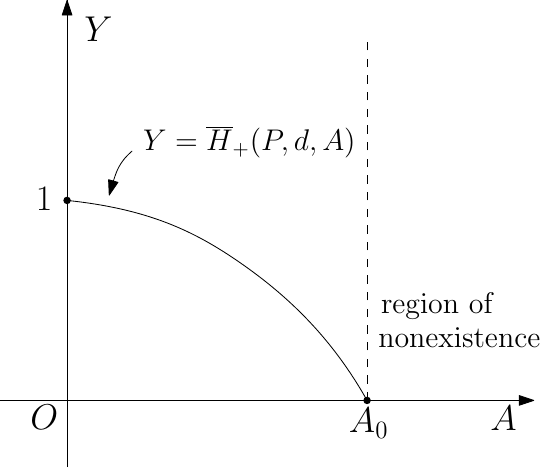} 
\captionof{figure}{Graph of $\overline H_+(P,d,A)$ when $n=2$, $|P|=1$,  $p_3>0$, $\max_{\Rset^2}f=0$ (and hence, $F(P)=0$)}
\end{center}

\begin{rmk}
Some comments are in order.

\begin{itemize}

\item[I.] {\bf Bifurcation of full homogenization.} 
For the shear flow, the existence of $\overline H_+(P)$ along all directions $P$ implies strong homogenization of the curvature G-equation \eqref{ge1} via the standard perturbed test function method.  
See Theorem \ref{theo:hom} in the Appendix.  
Accordingly, $\bar A=\inf_{|P|=1}A_0(P)$ is  the bifurcation point for strong homogenization.  
Precisely speaking,   for $A\in [0, \bar A)$ (homogenization zone), the strong homogenization holds for any uniformly continuous initial data $G(x,0)=g(x)$.  
For $A>\bar A$, the strong homogenization will fail for at least one linear initial data.   
When $n=2$, the homogenization zone is $[0, \bar A]$ due to the continuity of $A_0(P)$.  
For $n\geq 3$, it is not clear what happens when $A=\bar A$.

\item[II.]  {\bf Determination of the bifurcation value $A_0=A_0(P)$ for three dimensional shear flows.}  
When the physical correction $()_+$ is absent (the non-cutoff equation), the corresponding limit (denoted by $\overline H(P,d, A)$)  and homogenization always hold for all $P\in \Rset^{n+1}$ and $A\geq 0$,  which is a special case of the homogenization result in \cite{LS2005}. 
Moreover, $\overline H(P,d, A)- F(P)A$ is strictly decreasing for $A\in [0, \infty)$.  Below is the relation between $\overline H$ and $\overline H_+$:
$$
\overline H_+(P,d,A)=\max\{\overline H(P,d,A),\  AF(P)\}  \quad \text{for $A\in [0,A_0]$}.
$$
See Section \ref{sec:3} for more details.  The value   $A_1=A_1(P)$ in Theorem \ref{theo:main1} is the unique number such that $\overline H(P,d, A_1)- F(P)A_1=0$. Then
$$
\overline H(P,d, A)=\overline H_+(P,d, A) \quad \text{for $A\in [0,A_1]$}.
$$
In two dimensions, we have proved that $A_0=A_1$,  which  provides a way to determine the transition value in the physical situation.  Here is an intuitive picture to look at this physical meaning assuming that $\max_{\Rset^2}f=0$ and $P=(0,0,1)$:  When the flow intensity reaches $A_0$, the effective flame front (or the corresponding traveling wave solution \eqref{travel-wave}) becomes stagnant, and is immediately tore apart and destroyed if the flow intensity is increased a bit further.

\item[III.]  {\bf Whether $A_0=A_1$ when $n\geq 3$.}  
The cutoff equation can be more or less viewed as the non-cutoff equation with isolated singularities.  
Mathematically,    whether $A_0=A_1$ is closely related to the removabilities of those isolated singularities, which requires deep regularity theory from minimal surface type equations.   
Although $n\geq 3$ has no clear physical meaning in our context, it remains an interesting mathematical question whether $A_0=A_1$. Our proof of (2) relies on a Harnack inequality estimate in \cite[Theorem 1']{S1} that is only known to be true in two dimensions.  
Without the requirement of  \eqref{c1}, it is possible that $\overline H(P,d, A)<\overline H_+(P,d, A) $, which also leads to the instability of the transition value $A_0(P)$.   See Lemma \ref{lem:comparison} and Remark \ref{rmk:smooth}.  

\end{itemize}
 
 \end{rmk}

\subsection{Further questions to consider}

\begin{itemize}

\item {\bf Question 1.}  There is a consensus in the combustion literature that the curvature effect slows down flame propagation \cite{Ron}. 
Heuristically, this is because the curvature term smooths out the flame front and reduces the total area of chemical reaction \cite{Se1985}. The first mathematically rigorous result in this direction was obtained in \cite{LXY} for two dimensional shear flows through a delicate analysis, which implies that  for any non-constant  two dimensional shear flow, $d,A>0$, and any unit direction $P\in \Rset^2$, 
$$
\overline H_+(P,d,A)<\overline H_+(P,0,A),
$$
which is consistent with the experimental observation (e.g., \cite{CWL2013}). See Remark  \ref{rmk:1d} for more details.   An interesting question is to prove or disprove this for three dimensional shear flows. When $P=e_3$, the above inequality is easy to verify (see (5) in  Lemma \ref{lem:prop}).   However, it is not clear for other directions $P$.

\medskip

\item {\bf Question 2.}  It is not clear to us whether the bifurcation  is a generic  phenomenon in three dimensions.  
Unlike shear flows,   most realistic three dimensional  incompressible flows possess swirls and chaotic structures that help better mix things.  A famous periodic model is the Arnold-Beltrami-Childress (ABC) flow  \cite{CG}, a steady periodic solution to the Euler equation,
$$
V(X)=(a\sin x_3+c\cos x_2,\  b \sin x_1+a\cos x_3, \ c\sin x_2+b\cos x_1), 
$$
where $a$, $b$ and $c$ are three positive parameters. It remains a very interesting question to see whether $S_H=[0,\infty)$ (i.e., homogenization always holds) for this kind of flows.  

\end{itemize}

%\subsection{Further questions to try}
%
%{\bf Question 1:}  There is a consensus in combustion literature that the curvature effect slows down flame propagation \cite{Ron}. 
%Heuristically, this is because the curvature term smoothes out the flame front and reduces the total area of chemical reaction \cite{Se1985}. The first mathematically rigorous result in this direction was obtained in \cite{LXY} for two dimensional shear flows through delicate analysis. It was proved that, for fixed $p$ and $A$,  $\overline H$ is strictly decreasing with respect to $d$ for nonconstant $f$, which is consistent with the experimental observation (e.g., \cite{CWL2013}).  The question is whether we can also establish this for three dimensional shear flows.  Precisely speaking, can we show that for any unit direction $p\in \Rset^3$, 
%$$
%\overline H_+(p,d,A)<\overline H_+(p,0,A)\ ?
%$$
%This is easy to verify for $p=e_3$.  See (6) in Lemma \ref{lem:prop} in Section 3.   However, it is not clear for general $p$.
%
%
%\medskip
%
%
%\medskip
%
%{\bf Question 2:} Unlike shear flows.   most three dimensional realistic incompressible flows possess swirls and chaotic structures that help better mix things.  A famous periodic model is the Arnold-Beltrami-Childfress (ABC) flow  \cite{CG}  that is a steady periodic solution to the Navier-Stoke equation:
%$$
%V(x)=(A\sin x_3+C\cos x_2, B \sin x_1+A\cos x_3, \ C\sin x_2+B\cos x_1), 
%$$
%where $A$, $B$ and $C$ are three positive parameters. It remains a very interesting question to see whether $\overline H_{+}(p,d,A)$ exists for this kind of flows.  

\subsection{Other related works.} 
When the curvature effect is ignored ($d=0$), the existence of the effective burning velocity and full homogenization were proved independently in \cite{CNS} and \cite{XY2010} by distinct approaches  for general periodic nearly incompressible flows in any dimension.  
In other practical situations (e.g.,  phase transitions in material science \cite{CB,L1980}),  the curvature effect is also considered and  the  motion law is given by (with no drift term and $()_+$ correction)
$$
v_{\vec{n}}=a(x)-d\kappa
$$
for a continuous positive $\Zset^n$-periodic function $a(x)$.  The above formula is known as 
the Gibbs-Thomson relation. The $()_+$ is not needed in crystal growth since both 
freezing and melting could occur in the situation of ice formation \cite{L1980}.  The corresponding equation is 
$$
u_t + \left(a(x)- d\,  {\rm div}\left({Du\over |Du|}\right)\right)|Du|=0.
$$
 A related equation arising from the crystal growth model of birth-and-spread type was studied in \cite{GMT, GMOT}. 

\smallskip

Let us now focus on the periodic homogenization theory for level-set  mean curvature flows from front propagations.

\medskip

$\bullet$ When $a(x)$ and $Da(x)$ satisfy a coercivity condition, homogenization was proved in \cite{LS2005} for all dimensions thanks to the uniform gradient bound for the correctors (see Lemma \ref{lem:estimate} for a special case).  Employing the same coercivity condition in  \cite{LS2005},  an optimal rate of convergence $O(\epsilon^{1/2})$ can be established in  the special laminar setting \cite{Jang}. 

\medskip

$\bullet$  When $n=2$, homogenization was proved in \cite{CM} for all positive $a(x)$ by a geometric approach.
Moreover,  counterexamples are constructed  there when $n\geq 3$.  

\medskip

$\bullet$  Whether the homogenization still holds without coercivity on the gradient variable  is an  interesting problem in this area, where  approaches different from \cite{CM, LS2005} are needed.   There are various scenarios of non-coercivity. 
For example,   see \cite{CLS} for cases where $a(x)$ changes its signs.  
With the presence of a large drift term,  the curvature G-equation represents one of the most physically relevant non-coercive  examples.  
In  \cite{GLXY},    the cell problems in the homogenization theory (and consequently, the effective quantities)  have been established for an important class of two dimensional incompressible periodic flows  via a new Lagrangian method.

See \cite{ CLS, CN, DKY, KG} and the references therein for other  works in this aspect.  

\subsection*{Organization of the paper}

The paper is organized as follows.
In Section \ref{sec:2}, we give the proof of Theorem \ref{theo:main1}. The proofs in Section \ref{sec:2} rely on results from Section \ref{sec:3}, where we consider both the non-cutoff and cutoff equations and compare the corresponding effective burning velocities. In the Appendix, for the reader's convenience, we present a proof of the homogenization of the curvature G-equation with shear flows under the assumption that \eqref{long-time-limit} holds for all $P\in \Rset^{n+1}$ (equivalently, all approximate cell problems have solutions)  via standard methods.

\section{Proof of  Theorem \ref{theo:main1}} \label{sec:2}
Let $P=(p,p_{n+1})\in \Rset^{n+1}$ and $p\in \Rset^n$. This entire section is dedicated to the proof of  Theorem \ref{theo:main1}. Without loss of generality, we assume 
$$
p_{n+1}=1.
$$ 
Then $|P|=\sqrt{1+|p|^2}$.  Also,  by subtracting some suitable constants, we can always assume that
\[
\max_{\Rset^n}f=0.
\]
 This implies that $F(P)=0$. 

\subsection{Proof of items (1) and (2) of Theorem \ref{theo:main1} }
For  $\lambda>0$,  denote by $v_{\lambda}\in C(\Rset^n)$ the unique periodic viscosity solution to
\be\label{lambda-eq}
\lambda v_{\lambda}+\left(1-d\, {\rm div}{\frac{p+Dv_{\lambda}}{\sqrt{1+|p+Dv_{\lambda}|^2}}}\right)_+\sqrt{1+|p+Dv_{\lambda}|^2}+Af(x)=0.
\ee
 By  the maximum principle and the comparison principle,  
\be\label{eq:bdd-u-lambda}
\begin{cases}
 -|P|\leq  \lambda v_{\lambda}\leq -|P|-A\min_{\Rset^n}f,\\
\lambda \|Dv_{\lambda}\|_{L^{\infty}(\Rset^{n})}\leq A \| Df\|_{L^{\infty}(\Rset^{n})}.
\end{cases}
\ee
It is well known that (see \cite{AB} for instance),   the validity of \eqref{long-time-limit} is equivalent to  
\be\label{ergodic-limit}
-\lim_{\lambda\to 0}\lambda v_{\lambda}(x)=\overline H_+(P,d,A)\quad \text{uniformly in $\Rset^n$}.
\ee
Then $S_H=S_H(P)$ from Definition \ref{defin:set} is also the set of $A\geq 0$ such that there exists a constant (denoted by $\overline H_+(P,d,A)$) so that  the above \eqref{ergodic-limit} holds.  Also, throughout this section,  denote by $\overline H(P,d,A)$  the corresponding quantity without the cutoff ,  where, unlike the cutoff case,  the corresponding limit  \eqref{no-cut-limit} always exists for all $P\in \Rset^{n+1}$ and $A\geq 0$.  See section 3 for details. In particular,  owing to (2) in  Lemma  \ref{lem:formula}, if \eqref{ergodic-limit} holds, then we must have
\be\label{connection}
\overline H_+(P,d,A)=\max\{\overline H(P,d,A),\ AF(P)\}.
\ee
Hereafter,  for simplicity, we write $\overline H_+(d,A)=\overline H_+(P,d,A)$ (similarly, $\overline H(d,A)=\overline H(P,d,A)$)  as there is no confusion.

\begin{lem} $S_H=[0,\infty)$ or $S_H=[0,A_0]$ for some finite number $A_0\geq {1\over 2\max_{\Rset^n}|f|}$ with $\overline H_+(d,A_0)=0$.
\end{lem}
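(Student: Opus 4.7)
The plan combines monotonicity of the resolvents $v_\lambda^A$ in $A$ with the non-cutoff homogenization theory from Section~\ref{sec:3}, supplemented by a compactness-plus-squeeze argument at the critical parameter $A_0$. First, $0\in S_H$ is immediate: the constant $v_\lambda\equiv -|P|/\lambda$ solves \eqref{lambda-eq} when $A=0$, giving $\overline H_+(d,0)=|P|$. For $0\leq A'\leq A$, since $f\leq 0$ we have $A'f\geq Af$, and the comparison principle applied to \eqref{lambda-eq} yields $v_\lambda^{A'}\leq v_\lambda^A$ pointwise. Hence $-\lambda v_\lambda^A$ is nonincreasing in $A$ at every $x$, and so are the half-relaxed limits $\overline H^\pm(d,A):=\limsup/\liminf_{\lambda\to 0^+}(-\lambda v_\lambda^A(0))$.

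Next I invoke the non-cutoff theory to pin down an initial subinterval $[0,A_1]\subset S_H$ and to establish the claimed lower bound on $A_0$. By Section~\ref{sec:3}, $\overline H(d,A)$ is continuous and strictly decreasing in $A$, with $\overline H(d,0)=|P|$, and satisfies the a~priori bound $\overline H(d,A)\geq |P|-AM$, where $M:=\max_{\Rset^n}|f|$, coming from a constant supersolution exactly as in \eqref{eq:bdd-u-lambda}. Let $A_1>0$ be the unique root of $\overline H(d,A_1)=0$; then $A_1\geq |P|/M\geq 1/M>1/(2M)$. For $A\in[0,A_1]$ the non-cutoff corrector $w_A$ satisfies
\[
(1-d\,\kappa(w_A))\sqrt{1+|p+Dw_A|^2}=\overline H(d,A)-Af(x)\geq 0,
\]
which forces $1-d\,\kappa(w_A)\geq 0$ pointwise; hence $w_A$ also solves the cutoff cell problem with the same eigenvalue $\overline H(d,A)$, and by the standard Abelian--Tauberian correspondence $A\in S_H$ with $\overline H_+(d,A)=\overline H(d,A)$. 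In particular $[0,A_1]\subset S_H$, $\overline H_+(d,A_1)=0$, and $A_0:=\sup S_H\geq A_1\geq 1/(2M)$.

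If $A_0=+\infty$, then $S_H=[0,\infty)$ and we are done. Otherwise $A_0<\infty$. For any $A\in S_H\cap(A_1,\infty)$, monotonicity of $\overline H^\pm$ forces $\overline H_+(d,A)\leq \overline H_+(d,A_1)=0$, while the viscosity subsolution property of the associated corrector at any $x^\ast\in\Zset^n$ (where $f(x^\ast)=0$ by \eqref{c1}), tested against a strongly convex quadratic that makes $(1-d\kappa)_+$ vanish, yields $\overline H_+(d,A)\geq 0$; thus $\overline H_+\equiv 0$ on $S_H\cap[A_1,\infty)$. Taking $A_n\uparrow A_0$ with $A_n\in S_H$ and normalized correctors $w_{A_n}$ (for instance with $\min_{\Rset^n} w_{A_n}=0$), a uniform Lipschitz estimate from a Bernstein-type argument adapted to the cutoff equation, combined with Arzel\`a--Ascoli, produces a viscosity limit $w_\ast$ solving the cutoff cell problem at $A_0$ with eigenvalue $0$; hence $A_0\in S_H$ and $\overline H_+(d,A_0)=0$. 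Finally, for $A\in(A_1,A_0)$ the inequalities $A_1 f\geq Af\geq A_0 f$ say that $w_{A_1}$ (resp.\ $w_\ast$) is a subsolution (resp.\ supersolution) of the cell problem at $A$ with eigenvalue $0$, and the usual squeeze principle forces $\overline H^\pm(d,A)=0$, so $A\in S_H$. Thus $S_H=[0,A_0]$.

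The main obstacle is the compactness step: deriving uniform $L^\infty$-oscillation (or Lipschitz) bounds on the correctors $\{w_{A_n}\}$ in the presence of the degenerate cutoff nonlinearity. The potential failure of such bounds as $A$ approaches $A_0$ is precisely the mechanism behind the nonhomogenization phenomenon for $A>A_0$ that drives Theorem~\ref{theo:main1}.
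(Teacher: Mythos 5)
Your argument has the right skeleton — monotonicity of the resolvents in $A$ via the comparison principle, the non-cutoff theory to seed an initial interval $[0,A_1]\subset S_H$, and the identity $\overline H_+(d,A)=\max\{\overline H(d,A),0\}$ to force $\overline H_+(d,A_0)=0$ — but the step you yourself flag as ``the main obstacle'' is a genuine gap, not a technicality, and it is exactly the step the paper's proof is designed to avoid.

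Specifically, you close the argument by taking $A_n\uparrow A_0$ with $A_n\in S_H$, invoking ``normalized correctors $w_{A_n}$'' together with ``a uniform Lipschitz estimate from a Bernstein-type argument adapted to the cutoff equation,'' and passing to the limit to get a corrector at $A_0$. This fails on two counts. First, membership $A_n\in S_H$ only asserts the Abelian limit $-\lambda v_\lambda^{A_n}\to\overline H_+(d,A_n)$; it does \emph{not} provide a continuous periodic solution of the cutoff cell problem \eqref{cell-cutoff-2} for $A_n\in(A_1,A_0]$ (the paper's Theorem \ref{theo:main1}(2) only guarantees a cell-problem solution on $[0,A_1]$, and whether one exists beyond $A_1$ when $n\geq 3$ is left open). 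Second, the Bernstein gradient estimate of Lemma \ref{lem:estimate} is proved for the \emph{non-cutoff} operator and breaks down once the degenerate $(\cdot)_+$ becomes active; if a uniform gradient bound survived the cutoff, homogenization would persist for all $A$ and the main theorem would be false. Your final ``squeeze'' step inherits the same gap, because it uses the hypothetical limit corrector $w_\ast$ as a supersolution. The paper sidesteps all of this at the resolvent level: the comparison-principle estimate $\lambda\|v_{\lambda,1}-v_{\lambda,2}\|_{L^\infty(\Rset^n)}\leq |A_1-A_2|\max_{\Rset^n}|f|$ (equation \eqref{A-dependence}) immediately yields that $S_H$ is closed, without any compactness or correctors; Corollary \ref{cor:order} (again a pure comparison statement for $v_\lambda$, using $f\leq 0$) gives that $S_H$ is a down-set, hence an interval $[0,A_0]$; and the value $\overline H_+(d,A_0)=0$ follows by contradiction from \eqref{connection}, continuity of the non-cutoff quantity $\overline H(d,A)$, and Lemma \ref{lem:formula}(1). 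If you replace your compactness-and-squeeze paragraph with the estimate \eqref{A-dependence} and Corollary \ref{cor:order}, the proof closes.

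(Minor remark: your lower bound $A_1\geq 1/\max|f|$ is fine after normalizing $\max f=0$; the paper's $1/(2\max|f|)$ is the normalization-free version using $\max f-\min f\leq 2\max|f|$, so there is no discrepancy.)
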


\begin{proof}  Recall that $F(P)=0$.   Owing to  (1) in Lemma \ref{lem:prop},   $\overline H(P,d,A)\geq {1\over 2}$ if $A\in \left[0, \ {1\over 2\max_{\Rset^n}|f|}\right]$. Hence  (1) in Lemma \ref{lem:formula} implies that the interval $\left[0, \ {1\over 2\max_{\Rset^n}|f|}\right]\subset S_H$.   For $\lambda>0$ and $i=1,2$,   suppose that $v_{\lambda, i}(x)$  are solutions to \eqref{lambda-eq} corresponding to $A=A_i$ respectively.  
By the comparison principle, 
\be\label{A-dependence}
\lambda\|v_{\lambda,1}-v_{\lambda,2}\|_{L^\infty(\Rset^n)}\leq |A_1-A_2|\max_{\Rset^n}|f|.
\ee
Hence $S_H$ is a closed set.   Due to Corollary \ref{cor:order},  $S_H=[0,\infty)$ or $S_H=[0,A_0]$ for some $A_0\geq {1\over 2\max_{\Rset^n}|f|}$.  

 To finish the proof,  we just need to show that if $S_H=[0,A_0]$, then $\overline H_+(d,A_0)=0$. 
 In fact, if $\overline H_+(d,A_0)>0$,  then  by \eqref{connection},  $\overline H_+(d,A_0)=\overline H(d,A_0)>0$. Thanks to the continuous dependence of $\overline H(d,A)$ on $A$, there exists $\tau>0$ such that $\overline H(d,A)>0$ for $A\in [A_0-\tau, A_0+\tau]$.  Due to  (1) in  Lemma \ref{lem:formula},  $ [A_0-\tau, A_0+\tau]\subset S_H$, which is a contradiction. 
 \end{proof}

It remains to show that $S_H\not=[0,\infty)$ to prove (1). 
This is where we need crucially assumption \eqref{c1}.   
Choose 
$$
r_0={\min\left\{{1\over 1+|p|},\ d\right\}\over 8}.
$$
Denote  
$$
K_1=\min_{\Rset^n\backslash (B_{r_0}(0)+\Zset^n)}|f|>0
$$
and
$$
K_2=\max_{|x|\leq r_0}\left[\left(1-d\, {\rm div}{\frac{p+D\tilde P(x)}{\sqrt{1+|p+D\tilde P(x)|^2}}}\right)\sqrt{1+|p+D\tilde P(x)|^2}\right]>0,
$$
where
$$
\tilde P(x)=2-{2|x|^2 \over r_0^2}.
$$
Write
$$
\tilde A={1+K_2+|P|\over K_1}. 
$$

\begin{lem} 
$$
[\tilde A, \infty)\cap S_H=\emptyset.
$$
\end{lem}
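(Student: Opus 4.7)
The plan is to argue by contradiction at the level of the time-dependent equation \eqref{ge3n}, by exhibiting a subsolution whose growth at $x=\vec q$ is too fast to be compatible with the uniform ergodic limit $A\in S_H$ would provide.

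Suppose, toward contradiction, that $A\in S_H$ with $A\geq \tilde A$. Then $-\lim_{t\to\infty}v(x,t)/t=\overline H_+(P,d,A)$ exists, and using the general comparison $\overline H_+=\max\{\overline H,\,AF(P)\}$ from \eqref{connection} together with $F(P)=0$, we have $\overline H_+\geq 0$. Set $C:=K_2-AK_1$ and let $\Psi$ be the $\Zset^n$-periodic Lipschitz function that coincides with $\tilde P(x-\vec q-k)$ on each $B_{r_0}(\vec q+k)$ (these balls are disjoint from $B_{r_0}(\Zset^n)$ by the choice of $r_0$) and vanishes elsewhere. Consider the candidate
$$
\phi(x,t):=-2-Ct+\Psi(x).
$$

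The subsolution check splits into two regimes. On each ball $B_{r_0}(\vec q+k)$, the defining bound on $K_2$ gives $(1-d\kappa(\tilde P))_+\sqrt{1+|p+D\tilde P|^2}\leq K_2$, and the placement of the ball ensures $f\leq -K_1$, so
$$
\phi_t+(1-d\kappa(\phi))_+\sqrt{1+|p+D\phi|^2}+Af\leq -C+K_2-AK_1=0.
$$
Outside the bumps $\Psi\equiv 0$, so the inequality becomes $-C+|P|+Af\leq -C+|P|-AK_1=|P|-K_2\leq 0$ since $K_2>|P|$. Subsolution status across the gradient-discontinuity at $\partial B_{r_0}(\vec q+k)$ follows either by a small mollification of $\Psi$ (with matching adjustment of $C$) or from the general fact that a pointwise supremum of two continuous viscosity subsolutions is again a viscosity subsolution. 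Since $\Psi\leq 2$ gives $\phi(x,0)\leq 0=v(x,0)$, the comparison principle yields $v(x,t)\geq \phi(x,t)$ on $\Rset^n\times[0,\infty)$, and evaluating at $x=\vec q$ produces $v(\vec q,t)\geq -Ct$, hence $\limsup_{t\to\infty}-v(\vec q,t)/t\leq C=K_2-AK_1$.

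It remains to close the contradiction. The hypothesis $A\geq \tilde A=(1+K_2+|P|)/K_1$ gives $C\leq -(1+|P|)<0$, while $A\in S_H$ requires $-v(\vec q,t)/t\to \overline H_+\geq 0$. Combining, $0\leq \overline H_+\leq -(1+|P|)<0$, which is impossible. The main technical obstacle, and really the only step with any subtlety, is the viscosity subsolution check across $\partial B_{r_0}(\vec q+\Zset^n)$: the function $\Psi$ is only Lipschitz there and its gradient jumps from $D\tilde P$ to $0$, so one must confirm either directly via test functions (noting that any smooth $\varphi$ touching $\phi$ from above at such a point has gradient consistent with one of the two smooth pieces) or via an approximation argument that the subsolution inequality is preserved at the interface.
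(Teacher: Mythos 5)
The construction breaks down precisely at the lattice $\Zset^n$, where $f$ vanishes and $\Psi\equiv 0$, and this gap cannot be patched within the framework you propose. At a point $x_0\in\Zset^n$ you have $\Psi\equiv 0$ in a neighborhood, so $\phi$ is smooth there and the viscosity subsolution test reduces to a pointwise inequality:
$$
\phi_t + \left(1-d\,{\rm div}\frac{p+D\Psi}{\sqrt{1+|p+D\Psi|^2}}\right)_+\sqrt{1+|p+D\Psi|^2} + Af(x_0) = -C + |P| + 0.
$$
With $C=K_2-AK_1$ and $A\geq\tilde A=(1+K_2+|P|)/K_1$ you have $-C=AK_1-K_2\geq 1+|P|$, so this quantity is at least $1+2|P|>0$; the subsolution inequality fails. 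Your step ``Outside the bumps \ldots $Af\leq -AK_1$'' is only valid off $B_{r_0}(\Zset^n)$, and you cannot evade $B_{r_0}(\Zset^n)$ because the bumps are centered at $\vec q+\Zset^n$ and, by your own disjointness requirement, do not cover $\Zset^n$. More structurally, the cutoff $(\cdot)_+\geq 0$ means any separable ansatz $\phi=-Ct+\Psi(x)$ that increases in time ($-C>0$) can never be a subsolution at a smooth point of $\Psi$ where $f=0$: the test inequality forces $-C\leq -(1-d\kappa)_+\sqrt{\cdot}-Af\leq 0$. So the issue is not the Lipschitz kink at $\partial B_{r_0}(\vec q+\Zset^n)$ (which you correctly note is handleable), but the open region near $\Zset^n$.

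By contrast, the paper's proof works with the \emph{supersolution} inequality that $v_\lambda$ itself satisfies, and exploits exactly the feature that defeats your construction. Away from $B_{r_0}(\Zset^n)$ the large negative drift $Af\leq -AK_1$ forces, via the maximum principle, the minimum of $v_\lambda$ to sit on $\partial B_{r_0}(\Zset^n)$; a concave paraboloid touching from below then shows $v_\lambda$ must climb by more than $1$ across the annulus $\{r_0\leq |x|\leq 2r_0\}$ near each lattice point (Lemma \ref{lem:above1}); and finally the radial lower envelope together with the barrier Lemma \ref{lem:barrier} converts this forced steep growth into a curvature estimate $d\left(\frac{n-1}{\bar r}-2\right)\leq 1$ that contradicts the smallness of $r_0$. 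In short, the paper uses large $A$ to force $v_\lambda$ to oscillate too fast near $\Zset^n$ and then kills that oscillation with the curvature term; a one-signed travelling-in-time subsolution cannot see this tension because it must fail the cutoff inequality exactly where $f\approx 0$.
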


\begin{proof}
We proceed the proof by contradiction.  
Fix $A\geq \tilde A$ and assume that $A\in S_H$.
Then
$$
-\lim_{\lambda\to 0}\lambda v_{\lambda}(x)=c\quad \text{uniformly in $\Rset^n$}
$$
for some constant $c\in \Rset$.  Clearly,
$$
c\geq 0.
$$

Choose $\lambda>0$ small enough such that 
$$
\max_{\Rset^n}\lambda v_{\lambda}\leq {1\over 2}.
$$
Then $v_\lambda$ is a viscosity supersolution to
\be \label{eq:super}
\left(1-d\,{\rm div}{\frac{p+Dv_{\lambda}}{\sqrt{1+|p+Dv_{\lambda}|^2}}}\right)_+\sqrt{1+|p+Dv_{\lambda}|^2}+Af(x)\geq -{1\over 2}.
\ee

In the following, we only use the fact that $v_\lambda$ is a supersolution to \eqref{eq:super}.
In particular, there is no need to use \eqref{lambda-eq} in our arguments below.
Since $AK_1\geq 1+ |P|$,  owing to  the maximum principle (or the viscosity supersolution test), when  $\lambda>0$ is small enough,  the minimum of  $v_{\lambda}$ on $\Rset^n\backslash \left(B_{r_0}(0)+\Zset^n\right)$ can only be attained  on $\partial B_{r_0}(0)+\Zset^n$.  Also, since we are only concerned with  \eqref{eq:super}, by subtracting an appropriate constant from $v_\lambda$, we may without loss of generality  assume that 
\be\label{above-zero}
\min_{\Rset^n\backslash (B_{r_0}(0)+\Zset^n)}v_{\lambda}= \min_{\partial B_{r_0}(0)+\Zset^n}v_{\lambda}=0.
\ee
We claim that
\be\label{boundary-control}
v_{\lambda}(x)\geq 2\quad \text{for $x\in \partial B_{2r_0}(0)+\Zset^n$}.
\ee
We argue by contradiction.   Assume that there exists $x_0\in \partial B_{2r_0}(0)$ such that 
$$
v_\lambda(x_0)<2.
$$
Denote
$$
P(x)=2-{2|x-x_0|^2\over r_0^2}.
$$
Then 
$$
v_{\lambda}(x_0)-P(x_0)<0 \quad \mathrm{and} \quad v_{\lambda}(x)-P(x)\geq 0 \quad \text{for $|x-x_0|= r_0$}.
$$
Thus,  $v_{\lambda}(x)-P(x)$  attains a local minimum at some point $\bar x\in B_{r_0}(x_0)$. 
See Figure 3 below.  
By the viscosity supersolution test, 
\begin{align*}
K_2-AK_1&\geq K_2+Af(\bar x)\\
&\geq \left(1-d\, {\rm div}{\frac{p+DP(\bar x)}{\sqrt{1+|p+DP(\bar x)|^2}}}\right)\sqrt{1+|p+D P(\bar x)|^2}+Af(\bar x)\geq -{1\over 2}, 
\end{align*}
which contradicts to $AK_1\geq K_2+1$.   Hence our claim (\ref{boundary-control}) holds. 

\begin{center}\label{curv1}
\includegraphics[scale=0.6]{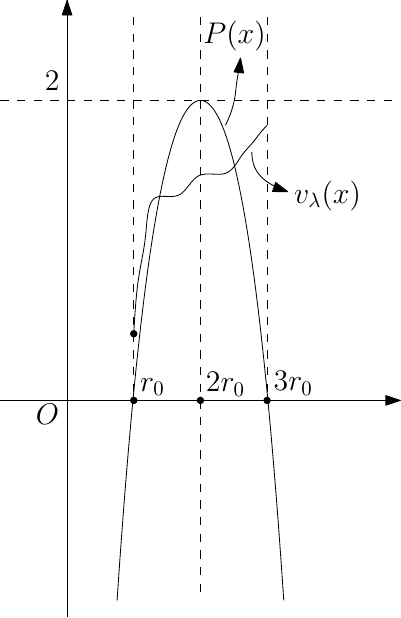} 
\captionof{figure}{The graph of $P(x)$ and $v_{\lambda}(x)$}
\end{center}

We note further that $u_\lambda(x)=p\cdot x+ v_\lambda(x)$ is a viscosity supersolution of 
\be\label{eq:supern}
\left(1-d\,{\rm div}{\frac{Du_{\lambda}}{\sqrt{1+|Du_{\lambda}|^2}}}\right)\sqrt{1+|Du_{\lambda}|^2}\geq 1 \quad \text{for $r_0<|x|<3r_0$}.
\ee
We now consider a radially symmetric lower envelope of $u_\lambda$.
Define
$$
w(x)=\min_{|y|=|x|}u_{\lambda}(y).
$$
 Thanks to (\ref{eq:bdd-u-lambda}), $w$ is Lipschitz continuous. Since the operator involved in \eqref{eq:supern} is rotation invariant,   $w$ is also a viscosity supersolution of \eqref{eq:supern}.  Thus,  $w(x)=g(|x|)$ for a  Lipschitz  continuous function $g:\left[r_0, {1\over 2}\right]\to \Rset$ that is a viscosity supersolution of 
$$
\sqrt{1+(g')^2}-d\left({g''\over 1+(g')^2}+{(n-1)g'\over r}\right)\geq 1 \quad \text{for $r\in \left(r_0,\  3r_0\right)$}.
$$
Due to \eqref{above-zero} and  \eqref{boundary-control},  $g(r_0)\leq  r_0|p|<{1\over 4} $ and $g({2r_0})\geq 2-2{r_0}|p|>{3\over 2}$.  In particular, $g(2r_0)-g(r_0)>1$. 
%See \cite[Lemma A.1]{GMT} for the connections between the radially symmetric functions and the general functions.

The main point of choosing $A$ large ($A \geq \tilde A$ in the above) is to get that $g$ grows rather quickly in the small interval $[r_0,2r_0]$.
Due to Lemma \ref{lem:barrier} below and $2r_0-r_0=r_0<{1\over 4}$,  there exist $L\geq 1$ and $\bar r\in  (r_0, 2r_0)$ such that
$$
\sqrt{1+L^2}-d\left(-2L+{(n-1)L\over \bar r}\right)\geq 1.
$$
This leads to $1\geq d\left({n-1\over \bar r}-2\right)\geq d\left({1\over \bar r}-2\right)$, which contradicts  the choice of $r_0$. 
The main term helping us to reach the contradiction is the curvature term $-d \frac{(n-1)g'}{r}$.
\end{proof}

\begin{lem}\label{lem:barrier} 
Assume that $\delta\in (0, {1\over 4}]$ and  $h:[0,\delta]\to \Rset$ is a  Lipschitz continuous function satisfying that  $h(0)=0$ and $h(\delta)=1$.  
Then there exist  $\bar r \in (0,\delta)$ and a smooth function $l:[0,\delta]\to \Rset$ such that  $h-l$ attains a local minimum at $\bar r$ and
$$
l'(\bar r)\geq {1\over 4\delta} \quad \mathrm{and} \quad {l''(\bar r)\over 1+(l'(\bar r))^2}\geq -2l'(\bar r).
$$
\end{lem}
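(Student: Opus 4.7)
The strategy I would pursue is a two-stage sliding-barrier argument: linear barriers first, and a smooth concave barrier in the residual case.

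For the first stage, for each $c\in[1/(4\delta),\ 1/\delta)$, set $\psi_c(r):=h(r)-cr$. Since $\psi_c(0)=0$ and $\psi_c(\delta)=1-c\delta>0$, the global minimum of $\psi_c$ on $[0,\delta]$ is nonpositive and cannot occur at $r=\delta$. If it is attained at some $\bar r\in(0,\delta)$ for any such $c$, then $l(r):=cr+\psi_c(\bar r)$ gives a global support from below with $l(\bar r)=h(\bar r)$, $l'(\bar r)=c\ge 1/(4\delta)$, and $l''(\bar r)=0$, which trivially satisfies the curvature bound. Otherwise the argmin of $\psi_c$ must lie at $r=0$ for every such $c$, forcing $h(r)\ge cr$ on $[0,\delta]$; letting $c\uparrow 1/\delta$ yields $h(r)\ge r/\delta$. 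If the chord $r/\delta$ meets $h$ at some interior point, $l(r):=r/\delta$ works; otherwise $\eta(r):=h(r)-r/\delta$ is strictly positive on $(0,\delta)$ and vanishes at both endpoints, which is the residual case.

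For the residual case, I would deploy the one-parameter family of smooth concave test functions $l_\alpha(r):=(1-e^{-\alpha r})/(1-e^{-\alpha\delta})$ with $\alpha\in(0,2]$. These satisfy $l_\alpha(0)=0$, $l_\alpha(\delta)=1$, and a direct computation gives
$$
\frac{l_\alpha''(r)}{1+(l_\alpha'(r))^2}+2l_\alpha'(r)=\frac{l_\alpha'(r)\,\bigl[2(1+(l_\alpha'(r))^2)-\alpha\bigr]}{1+(l_\alpha'(r))^2}\ \ge\ 0
$$
whenever $\alpha\le 2$, so the curvature condition holds pointwise along any touching point. Since $l_\alpha$ is concave, $l_\alpha'$ is decreasing; writing $x:=\alpha\delta\in(0,1/2]$, one has $l_\alpha'(\delta)=f(x)/\delta$ where $f(x)=x/(e^x-1)$ is decreasing with $f(1/2)=1/(2(e^{1/2}-1))>3/4$. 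Hence $l_\alpha'(r)\ge l_\alpha'(\delta)>1/(4\delta)$ uniformly in $\alpha\in(0,2]$ for every $\delta\le 1/4$, so the slope condition is automatic at any touching point. It remains to produce an $\alpha\in(0,2]$ for which the shifted barrier $l_\alpha-\max_{[0,\delta]}(l_\alpha-h)$ meets $h$ at some $\bar r\in(0,\delta)$; this is done by a continuity/intermediate value argument in $\alpha$, using that $l_\alpha\to r/\delta$ as $\alpha\to 0^+$ (so $h-l_\alpha>0$ on $(0,\delta)$ for small $\alpha$ by the residual case hypothesis), while $l_\alpha$ curls more and more above the chord as $\alpha$ grows, eventually forcing $\max(l_\alpha-h)$ to be attained in the interior.

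The main obstacle is the last step of the second stage: rigorously producing an interior touching within the admissible range $\alpha\in(0,2]$. If the single exponential family proves insufficient in certain subcases (for instance if $h$ remains above $l_2$ throughout $(0,\delta)$), the plan is to enrich it with a horizontal translation parameter, or to replace it by an adapted family of smooth concave test functions calibrated to the local behavior of $\eta$ near its maximum. The hypothesis $\delta\le 1/4$ is used throughout: it guarantees $1/\delta\ge 4$, keeping the slope of every member of the family above $1/(4\delta)$, and it is precisely what makes $\alpha\delta\le 1/2$ for the chosen range of $\alpha$, preserving the curvature bound via the identity above.
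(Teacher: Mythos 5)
Your Stage 1 is correct, and the identity $l_\alpha''=-\alpha\, l_\alpha'$ does make the curvature condition automatic for $\alpha\le 2$; but the final step of Stage 2 --- producing an interior touching point within the admissible range $\alpha\in(0,2]$ --- is a genuine gap, not a removable technicality, and the proposal does not close it. The whole family $\{l_\alpha\}_{0<\alpha\le 2}$ has uniformly bounded slope: $\sup_r l_\alpha'(r)=l_\alpha'(0)=\alpha/(1-e^{-\alpha\delta})\le 1/\bigl(\delta(1-\alpha\delta/2)\bigr)\le 4/(3\delta)$, using $\alpha\delta\le 1/2$. Now take $\delta=1/4$ and $h(r)=\min\{1,Mr\}$ with $M$ large (say $M=100$). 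This $h$ lands precisely in your residual case: $h(r)>r/\delta$ on $(0,\delta)$, and $\psi_c=h-cr$ is strictly positive on $(0,\delta]$ for every $c\in[1,4)$, so its minimum sits only at $r=0$. Yet the only points $\bar r\in(0,\delta)$ at which $h-l$ can attain a local minimum with $l'(\bar r)\ge 1/(4\delta)=1$ lie in the steep piece $(0,1/M)$, where necessarily $l'(\bar r)=h'(\bar r)=M$: in the flat piece a local minimum of $h-l$ forces $l'(\bar r)=0$, and at the corner $r=1/M$ it forces the right derivative of $l$ to be $\le 0$. Since $M>4/(3\delta)$, no vertical shift of any $l_\alpha$ can touch this $h$ admissibly, so the claim that increasing $\alpha$ ``eventually forces'' the maximum of $l_\alpha-h$ into the interior is false within the cap $\alpha\le 2$. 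The suggested remedies (horizontal translation, an adapted family) are exactly where the work lies and are not carried out.

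The obstruction is structural: for steep $h$ the touching point must occur where $h$ is steep, and there the barrier's slope must match $h$'s, so any correct barrier family must have \emph{unbounded} slope while still satisfying $l''\ge -2l'(1+(l')^2)$ at the contact point. This is how the paper proceeds: it slides sideways-opening parabolas $r=\delta+\alpha+\theta s(s-1)$ in the $(r,s)$-plane leftward into the graph $\{s=h(r)\}$ until first contact; the resulting $l(r)=\sqrt{(r-\delta-\alpha)/\theta+1/4}+1/2$ has arbitrarily large slope near its vertex, and the computation $l''(\bar r)=-2\theta\,(l'(\bar r))^3\ge -2(l'(\bar r))^3$ delivers the curvature bound. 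Replacing your exponential family by such a sideways family (or otherwise arranging unbounded slopes with controlled $l''/(l')^3$) is what is needed to complete the argument.
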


\begin{proof}
Choose an arbitrary $\alpha$ such that
$$
0<\alpha<{1\over 2}\min\left\{r\in [0,\delta]: \ h(r)={1\over 2}\right\}.
$$
Consider the $(r,s)$ coordinate plane and a family of parabolas $\Gamma_{\theta}$
$$
 r=r(s)=\delta+{\alpha}+\theta s(s-1)
$$
for $s\in [0,1]$ and $\theta\in [0,  {4\delta}]$.   
The graph of $h$ can be expressed as   $\{(r,s)\,:\, s=h(r)\}$.   
Increasing $\theta$,  apparently $\Gamma_{\theta}$ will touch the graph of $h$ from the right hand side at  $(\bar r, h(\bar r))$ for some $h(\bar r)\in \left({1 \over 2}, 1\right)$.
Then $\tau=\bar r -({\delta+\alpha- {\theta\over 4}})>0$. See Figure 4 below.  
Let $l$ be a smooth function on $[0,\delta]$ such that
$$
l(r)=\sqrt{{r-\delta-\alpha\over \theta}+{1\over 4}}+{1\over 2}
$$
for  $r\in (\bar r-{\tau\over 2}, \delta]$. Then $h-l$ attains a local minimum at $\bar r$ and 
$$
l'(\bar r)={1\over 2\theta}\left({\bar r-\delta-\alpha\over \theta}+{1\over 4}\right)^{-{1\over 2}}\geq {1\over \theta}\geq {1\over 4\delta}
$$
and
$$
l''(\bar r)= -{1\over 4\theta^2}\left({\bar r-\delta-\alpha\over \theta}+{1\over 4}\right)^{-{3\over 2}}\geq -2(l'(\bar r))^3.
$$
\end{proof}

\begin{center}\label{curv2}
\includegraphics[scale=0.6]{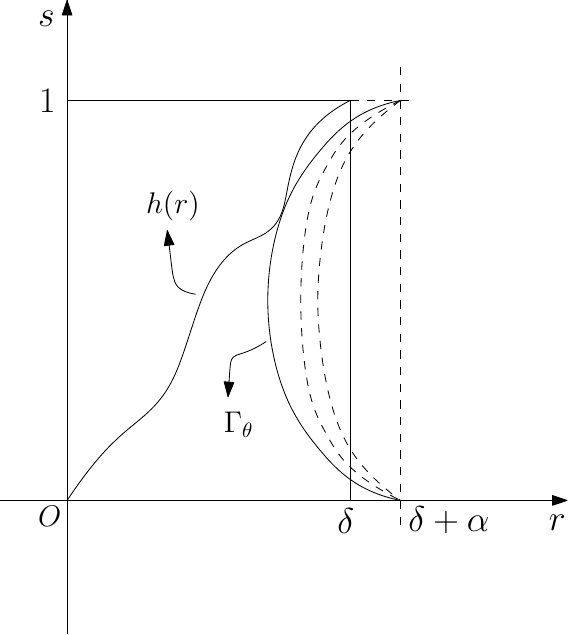} 
\captionof{figure}{Touching of $\Gamma_\theta$ and the graph of $h=h(r)$}
\end{center}

By similar arguments, we can actually derive the following fact related to isolated singularities.  
See \cite{S2} for more general results concerning minimal surface type equations when the set of  singularities has zero $n-1$ dimensional Hausdorff measure, where the proof is much more involved.

\begin{cor}\label{cor:bound}  Suppose that $\hat u\in C(B_1(0)\backslash\{0\})$ is a viscosity supersolution of 
$$
\left(1-d\,{\rm div}{\frac{D\hat u}{\sqrt{1+|D\hat u|^2}}}\right)\sqrt{1+|D\hat u|^2}\geq C \quad \text{in $B_1(0)\backslash\{0\}$}.
$$
for some constant $C$.  Then $\hat u$ has a lower bound, i.e.,
$$
\inf_{B_1(0)\backslash\{0\}}\hat u>-\infty.
$$
\end{cor}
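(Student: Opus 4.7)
The plan is to argue by contradiction and to mimic the symmetrization-and-barrier strategy used earlier in this section to prove $[\tilde A,\infty)\cap S_H=\emptyset$: if $\hat u$ were unbounded below near the origin, we would build a radial supersolution blowing up at $0$ and violate the operator inequality using a localized barrier of the type produced in Lemma \ref{lem:barrier}.

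Suppose for contradiction that $\inf_{B_1(0)\setminus\{0\}}\hat u=-\infty$. Since $\hat u$ is continuous, and hence bounded on every compact subset of $B_1(0)\setminus\{0\}$, the blow-up must occur at the origin. Form the radial lower envelope $w(x)=\inf_{|y|=|x|}\hat u(y)=g(|x|)$. Rotational invariance of the operator, together with the standard fact that the pointwise infimum of a family of viscosity supersolutions is again a viscosity supersolution, gives that $g\in C((0,1))$ is a viscosity supersolution of
$$
\sqrt{1+(g')^2}-d\left(\frac{g''}{1+(g')^2}+\frac{(n-1)g'}{r}\right)\geq C,
$$
and $\liminf_{r\to 0^+}g(r)=-\infty$. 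Fix once and for all a radius
$$
R_*\in\left(0,\ \frac{d(n-1)}{2(1+2d)}\right),
$$
which has the property that for every $\bar r\in(0,R_*)$ and every sufficiently large $L>0$,
$$
\sqrt{1+L^2}+2dL-\frac{d(n-1)L}{\bar r}<C.
$$
Producing a viscosity touching point $\bar r\in(0,R_*)$ with test slope $L$ as large as we please will contradict this.

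To produce such a touching point, use $\liminf g=-\infty$ to pick, for any large positive integer $N$, a point $r^*\in(0,R_*)$ with $g(R_*)-g(r^*)\geq N$. Partition $[r^*,R_*]$ into $N$ equal subintervals of length at most $R_*/N$. Pigeonhole applied to the net increment yields a subinterval $[\tilde a,\tilde b]$ with $g(\tilde b)-g(\tilde a)\geq 1$; shrinking the right endpoint via the intermediate value theorem produces $[a,b]\subset(0,R_*)$ with $b-a\leq R_*/N$ and $g(b)-g(a)=1$. Apply Lemma \ref{lem:barrier} to $h(s)=g(a+s)-g(a)$ on $[0,b-a]$ (valid once $N$ is large enough that $b-a\leq 1/4$) to obtain $\bar s\in(0,b-a)$ and a smooth $l$ such that $h-l$ attains a local minimum at $\bar s$, with
$$
l'(\bar s)\geq\frac{1}{4(b-a)}\geq\frac{N}{4R_*}\quad\text{and}\quad\frac{l''(\bar s)}{1+l'(\bar s)^2}\geq -2\,l'(\bar s).
$$
Testing the supersolution $g$ against $\phi(r)=g(a)+l(r-a)$ at $\bar r=a+\bar s\in(0,R_*)$ and inserting the bound on $l''$ yields
$$
C\leq\sqrt{1+l'(\bar s)^2}+2d\,l'(\bar s)-\frac{d(n-1)\,l'(\bar s)}{\bar r},
$$
which contradicts the choice of $R_*$ once $N$, and hence $l'(\bar s)$, is taken sufficiently large.

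The main obstacle, compared with the periodic setting treated earlier, is that naive pigeonholing can place the short increment-$1$ interval anywhere inside $[r^*,R_*]$, so the curvature-enhancing factor $(n-1)/\bar r$ might fail to dominate $\sqrt{1+L^2}$ even when $L$ is large. The fix is to commit in advance to the small upper radius $R_*$, chosen so that the curvature term already swamps both $\sqrt{1+L^2}$ and the $2dL$ contribution coming from the barrier, and then confine the entire pigeonhole construction to $(0,R_*)$; this is why we first fix $R_*$ and only afterwards generate the touching point.
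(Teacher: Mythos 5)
Your proposal is correct, and it follows the same route the paper has in mind when it writes ``by similar arguments'': form the radial lower envelope, reduce to a one-variable supersolution inequality on an interval of radii, and run the touching argument of Lemma~\ref{lem:barrier}. You have correctly identified, and resolved, the one genuinely new difficulty compared with the earlier lemma: there the increment-$1$ interval is pinned at $[r_0,2r_0]$ by the paraboloid barrier and the lower bound on $\partial B_{2r_0}$, whereas here the only input is $\liminf_{r\to 0^+}g(r)=-\infty$, so the touching radius $\bar r$ could a priori fall anywhere. Fixing the working window $R_*$ small \emph{before} pigeonholing, so that $d(n-1)/\bar r$ already dominates $\sqrt{1+L^2}+2dL$ uniformly for $\bar r\in(0,R_*)$ once $L$ is large, is exactly the right adaptation. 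Two minor remarks: you should record $R_*<1$ (and small enough that $R_*/N\le 1/4$ so Lemma~\ref{lem:barrier} applies, which your ``$N$ large'' already handles), and the ``infimum of supersolutions'' step is safe here precisely because the infimum over the compact sphere is attained and $g$ is continuous, so the lower envelope genuinely is a continuous viscosity supersolution of the radial inequality. Neither affects the validity of the argument.
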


Finally, we explain how to determine the value $A_1$ for fixed $P\in \Rset^{n+1}$ with $p_{n+1}\not= 0$.  To demonstrate the general rule, we keep $F(P)$ in the discussion here instead of replacing it by 0 as the assumption at the beginning of this section.  Recall that  $\overline H(P,d,A)$ is the effective quantity from the equation without cutoff (see Section 3).  
Note that  $\overline H(P,d,A)-AF(P)$ must be negative  for some $A> 0$. Otherwise,  $S_H(P)=[0,\infty)$ by (1) in Lemma \ref{lem:formula}.  So,  owing to (2) and (3) in Lemma \ref{lem:prop},  there exists a unique number $A_1=A_1(P)\in (0, A_0]$ such that 
$$
\overline H(P, d, A_1)=A_1F(P).
$$
Then
$$
\overline H(P, d, A)
\begin{cases}
>AF(P) \quad \text{for $A\in [0, A_1)$}\\[3mm]
<AF(P) \quad \text{for $A<A_1$}.
\end{cases}
$$
 Clearly, $A_1(P)$ is continuous with respect to $d$, $P$  and $f$.  Therefore, \eqref{connection} implies that
 $$
\overline H_+(P, d, A)=
\begin{cases}
\overline H(P, d, A)>AF(P) \quad \text{for $A\in [0, A_1)$}\\[3mm]
AF(P) \quad \text{for $A\in [A_1, A_0]$}.
\end{cases}
$$
The Lipschitz continuity with respect to $A$ is from \eqref{A-dependence}. 
Apparently,  for $A\in [0, A_1]$, the cell problems of equation without the physical cutoff \eqref{cell} is the same as the cell problem with the cutoff \eqref{cell-cutoff-2}.  
Hence \eqref{cell-cutoff-2} has a unique  $C^{2,\alpha}$ periodic solution $v=v(x,P)$ (up to a constant) for any $\alpha\in (0,1)$, equivalently a traveling wave solution for \eqref{ge1}:
\be\label{travel-wave}
G(X,t)=P\cdot X+v(x,P)-\overline H_{+}(P,d,A)t.
\ee

When $n\geq 3$, it is not clear to us whether $A_1=A_0$. If not, then the bifurcation value $A_0=A_0(P)$ might not be stable with respect to $f$. See  Remark \ref{rmk:smooth}.

\subsection{Proof of item (3) of Theorem \ref{theo:main1} }

Next,  we prove $A_0=A_1$ when $n=2$ (three dimensional shear flows),  which implies the stability of the transition value.  Let $\tilde v$ be a  $C^{2,\alpha}$ periodic solution to  the cell problem of the non-cutoff equation (see Section \ref{sec:3} below for the existence of $\tilde v$)
\be\label{cell-non-cutoff}
\left(1-d\, {\rm div}{\frac{p+D\tilde v}{\sqrt{1+|p+D\tilde v|^2}}}\right)\sqrt{1+|p+D\tilde v|^2}+Af(x)=\overline H(d,A) \quad \text{in $\Rset^n$}.
\ee
Recall that,  for $\lambda>0$, $v_\lambda\in C(\Rset^2)$ is the unique  periodic viscosity solution of \eqref{lambda-eq}. 
%$$
%\lambda v_{\lambda}+\left(1-d\, {\rm div}{\frac{Dv_{\lambda}}{\sqrt{1+|Dv_{\lambda}|^2}}}\right)_+\sqrt{1+|Dv_{\lambda}|^2}+Af(x)=0 \quad \text{in $\Rset^2$}.
%$$
 Note that if $\overline H(d,A)\geq 0$, then $\tilde v$ is also a solution to the cell problem of the cutoff equation
\be\label{cell-cutoff1}
\left(1-d\, {\rm div}{\frac{p+D\tilde v}{\sqrt{1+|p+D\tilde v|^2}}}\right)_+\sqrt{1+|p+D\tilde v|^2}+Af(x)=\overline H_+(d,A)=\overline H(d,A) \quad \text{in $\Rset^n$},
\ee
which implies that for the solution $v_\lambda$ to (\ref{lambda-eq}), 
$$
\lim_{\lambda\to 0}-\lambda v_{\lambda}(x)=\overline H(d,A)\quad \text{uniformly in $\Rset^2$}.
$$

By (3) in Lemma \ref{lem:prop},   $\overline H(d,A)$ is strictly decreasing with respect to $A$ for all $A>0$.  Also, note that $\overline H_+(P,d,A)=0$ for $A\in [A_1,A_0]$. Therefore, to establish $A_1=A_0$, it suffices to prove the following lemma. 
\begin{lem}\label{lem:sign} If
\be\label{limit}
\lim_{\lambda\to 0}\lambda v_{\lambda}(x)=0 \quad \text{uniformly in $\Rset^2$},
\ee
then $\overline H(d,A)= 0$.
\end{lem}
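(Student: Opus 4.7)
My plan is to argue by contradiction; suppose $\overline H(d,A)<0$. By Section~\ref{sec:3}, the non-cutoff cell problem \eqref{cell-non-cutoff} admits a $\Zset^2$-periodic $C^{2,\alpha}$ solution $\tilde v$, which I normalize so that $\max_{\Rset^2}\tilde v=0$. Writing $\mathcal{H}_+[v]$ and $\mathcal{H}[v]$ for the cutoff and non-cutoff operators appearing in \eqref{lambda-eq} and \eqref{cell-non-cutoff}, the identity $(a)_+=\max\{a,0\}$ together with $F(P)=\max_{\Rset^2}(Af)=0$ yields
\[
\mathcal{H}_+[\tilde v](x)=\max\{\mathcal{H}[\tilde v](x),\ Af(x)\}=\max\{\overline H(d,A),\ Af(x)\}\le 0,
\]
with equality only at points of $\Zset^2$. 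Since $\tilde v\le 0$, one has $\lambda \tilde v+\mathcal{H}_+[\tilde v]\le 0$, so $\tilde v$ is a periodic subsolution of \eqref{lambda-eq}, and by comparison $\tilde v\le v_\lambda$ on $\Rset^2$ for every $\lambda>0$.

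Let $x_\lambda\in\Rset^2$ minimize the nonnegative periodic function $v_\lambda-\tilde v$. Using $\tilde v+(v_\lambda-\tilde v)(x_\lambda)$ as a smooth test function from below at $x_\lambda$, the viscosity supersolution test for \eqref{lambda-eq} gives
\[
\max\{\overline H(d,A),\ Af(x_\lambda)\}=\mathcal{H}_+[\tilde v](x_\lambda)\ \ge\ -\lambda v_\lambda(x_\lambda).
\]
Since $\|\lambda v_\lambda\|_{L^\infty(\Rset^2)}\to 0$ by \eqref{limit} while $\overline H(d,A)<0$ is a fixed negative number, for all small $\lambda$ the maximum on the left must be realized by $Af(x_\lambda)$, and $Af(x_\lambda)\to 0$. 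Hypothesis~\eqref{c1} then forces $x_\lambda\to \Zset^2$, so by periodicity we may assume $x_\lambda\to 0$.

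The decisive step is a local Harnack argument, available only in two dimensions. Set $w_\lambda=v_\lambda+\tilde v(x_\lambda)-v_\lambda(x_\lambda)$, a periodic translate of $v_\lambda$ with $w_\lambda\ge\tilde v$ and equality at $x_\lambda$, still satisfying $\mathcal{H}_+[w_\lambda]=-\lambda v_\lambda\to 0$ uniformly. Fix $r>0$ small enough that $B_r(0)\cap \Zset^2=\{0\}$ and $A\|Df\|_{L^\infty}r<|\overline H(d,A)|$; then $\mathcal{H}_+[\tilde v]\equiv Af$ on $B_r(0)$ and $\|\mathcal{H}_+[\tilde v]\|_{L^\infty(B_r(0))}\le A\|Df\|_{L^\infty}r$. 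I plan to apply the two-dimensional Harnack inequality of \cite[Theorem 1']{S1} to the nonnegative difference $w_\lambda-\tilde v$, which on the region where the cutoff is inactive behaves like a viscosity supersolution of a uniformly elliptic linearization of the minimal surface operator about $\tilde v$ with right-hand side $-\lambda v_\lambda-Af$. This should yield an estimate of the form
\[
\sup_{B_{r/2}(0)}(w_\lambda-\tilde v)\ \le\ C\Bigl(\inf_{B_{r/2}(0)}(w_\lambda-\tilde v)+\|\lambda v_\lambda\|_{L^\infty}+A\|Df\|_{L^\infty}r\Bigr).
\]
For $\lambda$ so small that $x_\lambda\in B_{r/2}(0)$, the infimum vanishes; sending $\lambda\to 0$ first and then $r\to 0$ forces $w_\lambda\to\tilde v$ locally uniformly near $0$.

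By standard stability of viscosity solutions, the $C^{2,\alpha}$ function $\tilde v$ then satisfies $\mathcal{H}_+[\tilde v]=0$ classically on a neighborhood of $0$. But on $B_{r/2}(0)\setminus\{0\}$ we have $f(x)<0$ and $\overline H(d,A)<0$, so $\mathcal{H}_+[\tilde v](x)=\max\{\overline H(d,A),Af(x)\}<0$, a contradiction. Hence $\overline H(d,A)=0$. The hardest step is the Harnack estimate in the third paragraph: one must control the oscillation of $w_\lambda-\tilde v$ near the isolated point $0$ of the cutoff locus, and this rests on the two-dimensional minimal surface theory of \cite{S1} that has no known counterpart in higher dimensions — precisely the reason the equality $A_0=A_1$ is proved here only for $n=2$ (cf.\ Remark~II).
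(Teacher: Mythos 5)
Your setup through the second paragraph is sound: the identity $(a)_+b=\max\{ab,0\}$ for $b>0$ does give $\mathcal{H}_+[\tilde v]=\max\{\overline H(d,A),Af\}\le 0$, the normalization $\max_{\Rset^2}\tilde v=0$ makes $\tilde v$ a subsolution of \eqref{lambda-eq}, and the touching-point inequality $\max\{\overline H(d,A),Af(x_\lambda)\}\ge -\lambda v_\lambda(x_\lambda)$ correctly forces $x_\lambda\to\Zset^2$. The proof breaks down at your ``decisive step.'' The result \cite[Theorem 1']{S1} is an interior gradient estimate (a Harnack inequality for the vertical component of the unit normal) for \emph{solutions} of two-dimensional mean-curvature-type equations; it is not a Harnack inequality for the nonnegative \emph{difference} of a supersolution and a subsolution. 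A difference estimate of the kind you display would require the linearized operator to be uniformly elliptic on $B_r(0)$, which requires a uniform bound on $Dv_\lambda$ there --- and that is precisely what is unavailable near $\Zset^2$: the gradient bound $\|Du_\lambda\|_{L^\infty(D_\delta)}\le C_\delta$ holds only on $D_\delta=\{f<-\delta\}$ with $C_\delta$ blowing up as $\delta\to 0$, and the limit $u$ genuinely has isolated singularities on $\Zset^2$ (Corollary \ref{cor:bound} yields only a lower bound there, not continuity). Note also that the cutoff is active exactly near $\Zset^2$, i.e.\ inside the ball $B_r(0)$ where you work, so the caveat ``on the region where the cutoff is inactive'' excludes the region you need. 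Finally, even granting the displayed inequality, its error term $A\|Df\|_{L^\infty}r$ is of the same order as the radius of the ball $B_{r/2}(0)$ on which it holds; letting $\lambda\to0$ and then $r\to0$ gives information only at the single point $0$, not the locally uniform convergence $w_\lambda\to\tilde v$ on a \emph{fixed} neighborhood of $0$ that your stability argument requires.

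The paper's proof is structured to avoid any such difference estimate. It uses \cite[Theorem 1']{S1} only on the sets $D_\delta$, away from $\Zset^2$, to extract a classical limit $u$ of $u_\lambda$ on $D_0$; it controls $u$ near the isolated singularities by the radial barrier argument of Corollary \ref{cor:bound}; and it proves a removability statement in the viscosity sense (Lemma \ref{lem:supersolution}) via integration by parts, exploiting only the boundedness of $u$ and of the flux $d\,Du/\sqrt{1+|Du|^2}$. Comparing the resulting supersolution $u$ with $\tilde u=p\cdot x+\tilde v$ at a minimum point of $u-\tilde u$ then gives $\overline H(d,1)\ge 0$. If you wish to keep your contradiction framework, you would still need a substitute for Lemma \ref{lem:supersolution} to handle touching points on $\Zset^2$; as written, the decisive step is a genuine gap.
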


\begin{proof}
Without loss of generality,  let $A=1$.   
Suppose that  \eqref{limit} holds. 
Then   $\overline H_+(d,1)=0$ and   $\overline H(d,1)\leq 0$. 
So it suffices to show that  $\overline H(d,1)\geq 0$. 
For $\delta \ge 0$,  write
$$
D_\delta=\left\{x\in \Rset^2\,:\, f(x)< -\delta\right\}.
$$
Note that for any fixed $\delta\in (0, {1\over 2}]$, when $\lambda$ is small enough,   $v_\lambda$ is a viscosity solution to 
$$
\lambda v_{\lambda}+\left(1-d\, {\rm div}{\frac{p+Dv_{\lambda}}{\sqrt{1+|p+Dv_{\lambda}|^2}}}\right)\sqrt{1+|p+Dv_{\lambda}|^2}+f(x)=0 \quad  \text{ in $D_\delta$}.
$$
Recall that we have, in light of \eqref{eq:bdd-u-lambda},
$$
\lambda \|Dv_{\lambda}\|_{L^{\infty}(\Rset^2)}\leq \|Df\|_{L^{\infty}(\Rset^2)}.
$$
Accordingly,  by the regularity theory of quasilinear elliptic equations in  \cite{GT},
$$
v_{\lambda}\in C^{2,\alpha}(D_\delta).
$$
for any $\alpha\in (0,1)$.  Choose $x_{\lambda}\in \Rset^2$ such that  $v_\lambda(x_{\lambda})=\max_{\Rset^2}v_{\lambda}$. Then by the definition of viscosity solutions, 
$$
\lambda v_{\lambda}(x_{\lambda})+|P|+f(x_\lambda)\leq 0.
$$
Since $|P|\geq 1$,    when $\lambda$ is small enough, we have that $x_{\lambda}\in D_{1\over 2}$, i.e., :
$$
\{x\in \Rset^2\,:\, v_\lambda(x)=\max_{\Rset^2}v_{\lambda}\}\subset D_{1\over 2}. 
$$
Let $u_\lambda(x)=v_{\lambda}(x)+p\cdot x$.  Since $v_\lambda\in C^{2,\alpha}( D_{\delta})$,  we have $Du_\lambda(x_\lambda)=p$ and 
\be\label{mini-form}
-d\, {\rm div}{\frac{Du_{\lambda}}{\sqrt{1+|Du_{\lambda}|^2}}}={{-f(x)+\lambda p\cdot x-\lambda u_{\lambda}\over \sqrt{1+|Du_{\lambda}|^2}}-1} \quad \text{in $D_\delta$}.
\ee
Using notations from  \cite{S1},  we write here for $1\leq i, j\leq 2$
$$
a_{ij}^{*}(x,z,\mu)=\delta_{ij}-\mu_i\mu_j \quad \text{and} \quad b^*(x,z,\mu)=(f(x)-\lambda p\cdot x+\lambda z)\mu_3+1,
$$
where $\mu=(\mu_1, \mu_2, \mu_3)={(-p,\ 1)\over \sqrt{1+|p|^2}}$.  
Since $\lambda z$ is uniformly bounded in our situation,  it is easy to check that $a_{ij}^{*}$ and $b^*(x,z,\mu)$ satisfy the structural requirements (3.2) and (3.3) in \cite{S1}.  
Then  we  apply the Harnack inequality \cite[Theorem 1']{S1}  to equation \eqref{mini-form} to obtain that
$$
\|Du_{\lambda}\|_{L^{\infty}(D_\delta)}\leq C_\delta.
$$
Hereafter $C_\delta>0$ is a constant depending only on $\delta$, $f$ and $|p|$.  This is the only place the dimension 2 is used.   Set
$$
\tilde u_{\lambda}=u_{\lambda}-u_{\lambda}\left({1\over 2}, {1\over 2}\right).
$$
The regularity theory of quasilinear elliptic equations in  \cite{GT} implies that $\|\tilde u_\lambda\|_{C^{2,\alpha}(D_\delta)}\leq C_{\delta}$.  By passing to a subsequence if necessary,  we have that 
$$
\lim_{\lambda\to 0}\tilde u_{\lambda}=u(x) \quad \text{locally uniformly on $D_0$.}
$$
Here, $u\in C^{2,\alpha}(D_0)$ is a periodic solution to 
$$
-d\, {\rm div}{\frac{Du}{\sqrt{1+|Du|^2}}}={-f(x)\over \sqrt{1+|Du|^2}} -1\quad \text{in $D_0$},
$$
and $u\left({1\over 2}, {1\over 2}\right)=0$. 
In particular,  $\Zset^2$ is the set of isolated singularities of $u$.  
Due to Corollary \ref{cor:bound},  
$$
\liminf_{x\to 0}u(x)>-\infty.
$$
In other words,  $u$ is bounded.  Now define
$$
u(0)=\liminf_{x\to 0} u(x).
$$
Then $u$ is lower semi-continuous at $0$.   Choose $\bar x\in \Rset^2$ satisfying  that for $\tilde u=p\cdot x+\tilde v$ with $\tilde v$ from \eqref{cell-non-cutoff}, 
$$
u(\bar x)-\tilde u (\bar x)=\min_{\Rset^2}(u-\tilde u).
$$
Thanks to Lemma \ref{lem:supersolution} below,
$$
-d\, {\rm div}{\frac{D\tilde u}{\sqrt{1+|D\tilde u|^2}}}(\bar x)\geq {-f(\bar x)\over \sqrt{1+|D\tilde u(\bar x)|^2}} -1.
$$
Equivalently,
$$
\left(1-d\, {\rm div}{\frac{D\tilde u}{\sqrt{1+|D\tilde u|^2}}}(\bar x)\right)\sqrt{1+|D\tilde u|^2(\bar x)}+f(\bar x)\geq 0.
$$
Then  \eqref{cell-non-cutoff} leads to $\overline H(d,1)\geq 0$.  \end{proof}

Write
$$
H(x)={-f(x)\over \sqrt{1+|Du|^2}} -1.
$$
Then $H(x)$ is periodic and continuous with $H(0)=-1$.  
The following lemma says that the isolated singularities  of solutions to $-d\, {\rm div}{\frac{Du}{\sqrt{1+|Du|^2}}}=H(x)$ at $0$ are removable in the sense of viscosity solutions.  
If $H(x)$ is Lipschitz continuous, it is a classical result of  \cite{S2}  that a set  of  singularities with zero ($n-1$)-dimensional Hausdorff measure is removable.  
In our situation, it is not clear whether $H(x)$ is Lipschitz continuous near $\Zset^2$.    A weaker version in the sense of viscosity solutions is enough for our purpose. 

\begin{lem}\label{lem:supersolution}  
Suppose that $\phi$ is  a $C^2$ function such that  $u-\phi$ attains a   minimum at $\tilde x\in \Rset^2$. 
Then
$$
-d\, {\rm div}{\frac{D\phi}{\sqrt{1+|D\phi|^2}}}(\tilde x)\geq {-f(\tilde x)\over \sqrt{1+|D\phi(\tilde x)|^2}} -1.
$$
Here $u$ is the function in the proof of the previous Lemma \ref{lem:sign}. 
\end{lem}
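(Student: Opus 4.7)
The proof splits into two cases depending on whether $\tilde x$ lies in the smooth region $D_0 = \Rset^2 \setminus \Zset^2$ or on the singular lattice $\Zset^2$. The case $\tilde x \notin \Zset^2$ is immediate: $u \in C^{2,\alpha}$ in a neighborhood of $\tilde x$ and solves the PDE classically there, so the minimum condition $Du(\tilde x) = D\phi(\tilde x)$, $D^2 u(\tilde x) \geq D^2 \phi(\tilde x)$, combined with the degenerate ellipticity of the operator (the matrix $a_{ij}(p)$ appearing in $\mathrm{div}(Dv/\sqrt{1+|Dv|^2}) = a_{ij}(Dv)\,v_{ij}$ is positive semidefinite), gives the desired inequality directly.

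For the substantive case $\tilde x \in \Zset^2$, by periodicity I take $\tilde x = 0$; then $f(0) = \max f = 0$ and the target becomes $-d\,\mathrm{div}(D\phi/\sqrt{1+|D\phi|^2})(0) \geq -1$. Normalize $\phi(0) = u(0)$, so $u - \phi \geq 0$ with equality at $0$; by the liminf definition $u(0) = \liminf_{x\to 0} u(x)$ there is a sequence $x_k \to 0$ with $(u-\phi)(x_k) \to 0$. I assume (without loss of generality, after modifying $\phi$ by a term vanishing to second order at $0$ such as $-|x|^4$) that the minimum at $0$ is strict so that $(u - \phi) \geq \eta > 0$ on $\partial B_r(0)$ for some small $r > 0$. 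The plan is to perturb $\phi$ in a way that shifts the minimum of $u - \phi$ off $\{0\}$ into $D_0$, invoke the classical PDE there, and pass to the limit. Set $\phi_\epsilon(x) = \phi(x) + \epsilon|x|^2$ for small $\epsilon > 0$, and let $y_\epsilon$ be a minimum of the lsc function $u - \phi_\epsilon$ on the compact set $\overline{B_r(0)}$. The sequence $\{x_k\}$ yields $\min(u - \phi_\epsilon) \leq 0$, while on $\partial B_r$ one has $(u - \phi_\epsilon) \geq \eta - \epsilon r^2 > 0$ for $\epsilon < \eta/r^2$; hence in the generic situation $y_\epsilon$ lies in $B_r \setminus \{0\} \subset D_0$. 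At such a point $u$ is $C^{2,\alpha}$ and the PDE holds classically, so $Du(y_\epsilon) = D\phi_\epsilon(y_\epsilon)$, $D^2 u(y_\epsilon) \geq D^2 \phi_\epsilon(y_\epsilon)$, and the degenerate-elliptic manipulation gives
\[
-d\,\mathrm{div}\bigl(D\phi_\epsilon/\sqrt{1+|D\phi_\epsilon|^2}\bigr)(y_\epsilon) \;\geq\; \frac{-f(y_\epsilon)}{\sqrt{1+|D\phi_\epsilon(y_\epsilon)|^2}} - 1.
\]

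Compactness produces a subsequential limit $y_\epsilon \to y^* \in \overline{B_r}$, and the combination $\min(u-\phi_\epsilon) \to 0$ together with the strict minimum of $u - \phi$ at $0$ forces $y^* = 0$; then $D\phi_\epsilon(y_\epsilon) \to D\phi(0)$, $D^2 \phi_\epsilon(y_\epsilon) \to D^2\phi(0)$, and $f(y_\epsilon) \to 0$, so passing to the limit yields the inequality at $\tilde x = 0$. The main obstacle is the non-generic situation in which $y_\epsilon = 0$ for every small $\epsilon$, i.e., $(u - \phi)(x) \geq \epsilon|x|^2$ near $0$ for all relevant $\epsilon$. Setting $\epsilon^{*} = \sup\{\epsilon \geq 0 : (u - \phi)(x) \geq \epsilon|x|^2 \text{ near } 0\}$, one reruns the argument above with $\phi$ replaced by $\phi + C|x|^2$ for $C$ slightly exceeding $\epsilon^{*}$ (so the minimum has just been pushed off $\{0\}$ and the generic case applies), obtains sup-inequalities at the corresponding interior minimizers $y_C \to 0$, and passes to the limit $C \searrow \epsilon^{*}$ to deduce the inequality for $\phi + \epsilon^{*}|x|^2$ at $0$; by degenerate ellipticity, this inequality is strictly stronger than, hence implies, the one for $\phi$ itself, completing the argument.
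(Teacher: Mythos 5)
Your strategy is genuinely different from the paper's: you try to dislodge the minimum of $u-\phi$ from the singular point by quadratic perturbations and then pass to the limit, whereas the paper argues by contradiction in divergence form, integrating $\bigl(\mathrm{div}\,F(Du)-\mathrm{div}\,F(D\phi)\bigr)(\phi+\delta-u)\eta$ over $\{u<\phi+\delta\}\setminus\overline{B_\theta(0)}$ with a cutoff $\eta$ around the singularity, using the monotonicity of $F(q)=d\,q/\sqrt{1+|q|^2}$ together with the bounds $|F|\le d$ and $\|u\|_{\infty}<\infty$ (from Corollary \ref{cor:bound}) to show that the cutoff contributes only $O(\theta)$. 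Unfortunately your version has a genuine gap exactly where the lemma is nontrivial. Nothing prevents $\epsilon^{*}=\sup\{\epsilon\ge 0:\ (u-\phi)(x)\ge\epsilon|x|^2\ \text{near }0\}$ from being $+\infty$: a priori $u-\phi$ could vanish at $0$ like $|x|^{3/2}$ or $|x|$, since $u$ is only known to be a bounded classical solution on $D_0$ and lower semicontinuous at $0$, with no second-order control from above at the singularity (such control would follow from a removability theorem, which is precisely what is not available here because $H$ need not be Lipschitz near $\Zset^2$). In that case no perturbation $\phi+C|x|^2$ ever moves the minimizer off $\{0\}$, your argument produces no touching point in the smooth region, and the scheme collapses. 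Excluding this scenario is essentially equivalent to the conclusion itself (if the supersolution inequality held for all tests $\phi+C|x|^2$, letting $C\to\infty$ would give a contradiction), so it cannot be treated as a harmless degenerate case; it must be ruled out using the equation near the singularity, which your argument never does.

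Even when $\epsilon^{*}<\infty$, the final limiting step is not justified: for $C\searrow\epsilon^{*}$ the interior minimizers $y_C$ of $u-\phi-C|x|^2$ satisfy only $(u-\phi)(y_C)\le C|y_C|^2\le Cr^2$, which does not tend to $0$, so $y_C$ may converge to some $y^{*}\ne 0$ (another minimizer of $u-\phi-\epsilon^{*}|x|^2$), and the limiting inequality is then obtained at $y^{*}$ rather than at the singular point. The smooth case $\tilde x\notin\Zset^2$ and the observation that the inequality for $\phi+\epsilon^{*}|x|^2$ at $0$ implies the one for $\phi$ are both correct, but the core of the lemma --- removability of the point singularity in the viscosity-supersolution sense --- requires the integral, capacity-type argument of the paper (or an input in the spirit of \cite{S2}), not a purely pointwise touching argument.
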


\begin{proof}
 The conclusion is obvious when $\tilde x\notin \Zset^2$.  
Let us now assume that $\tilde x=0$. 
Assume by contradiction that
$$
-d\, {\rm div}{\frac{D\phi}{\sqrt{1+|D\phi|^2}}}(0)<-1=H(0).
$$
Then due to the continuity of $H=H(x)$,   there exists $\tau\in (0, {1\over 4})$ such that 
$$
-d\, {\rm div}{\frac{D\phi}{\sqrt{1+|D\phi|^2}}}(x)<H(x)-\tau \quad \text{for $x\in B_{\tau}(0)$},
$$
Without loss of generality, we may assume that
$$
0=u(0)-\phi(0)<u(x)-\phi(x) \quad \text{for $x\in B_{1\over 4}(0)\backslash\{0\}$}.
$$
Choose $\delta>0$ such that 
$$
U_\delta=\left\{x\in B_{1\over 4}(0)\,:\, u(x)<\phi(x)+\delta\right\}\subset B_\tau(0).
$$
Since $u$ is continuous away from 0 and $u(0)=\liminf_{x\to 0}u(x)$,  $U_\delta\backslash\{0\}$ is a non-empty open set.  Note that $U_{\delta}$ might not be open at the point $0$.   Choose $\theta>0$ small enough such that
$$
U_{\delta,\theta}=U_{\delta}\backslash \overline{B_{\theta}(0)}\not=\emptyset. 
$$
Clearly, $U_{\delta,\theta}$ is an open set.   Note that
$$
-d\, {\rm div}{\frac{D\phi}{\sqrt{1+|D\phi|^2}}}+d\, {\rm div}{\frac{Du}{\sqrt{1+|Du|^2}}}<-\tau \quad \text{in $U_{\delta,\theta}$}.
$$
Choose $\eta\in C^{\infty}(\Rset^2,[0,1])$ such that $\eta=0$ in $B_\theta(0)$,  $\eta=1$ on $\Rset^2 \setminus B_{2\theta}(0)$ and $|D\eta|\leq {C\over \theta}$ in $\Rset^2$.   Then
$$
(\phi+\delta-u)\eta=0 \quad  \text{$\partial U_{\delta,\theta}$},
$$
and
$$
\int_{U_{\delta,\theta}}\left(-d\, {\rm div}{\frac{D\phi}{\sqrt{1+|D\phi|^2}}}+d\, {\rm div}{\frac{Du}{\sqrt{1+|Du|^2}}}\right)(\phi+\delta-u)\eta\,dx\leq -\tau \int_{U_{\delta,\theta}}(\phi+\delta-u)\eta\,dx.
$$
Therefore, 
$$
\int_{U_{\delta,\theta}} (F(D\phi)-F(Du))\cdot (D\phi-Du)\eta\,dx\ +\ \mathrm{II}\ \leq -\tau \int_{U_{\delta,\theta}}(\phi+\delta-u)\eta\,dx.
$$
Here, $F(D\psi)=d{\frac{D\psi}{\sqrt{1+|D\psi|^2}}}$ and
$$
\mathrm{II}=d\int_{U_{\delta,\theta}} (\phi+\delta-u)(F(D\phi)-F(Du))\cdot D\eta\,dx.
$$
Due to the boundedness of $u$ and $|F|\leq 1$, 
$$
|\mathrm{II}|\leq C\int_{B_{2\theta}(0)}|D\eta|\,dx\leq  {C\theta}.
$$
Therefore, sending $\theta\to 0$ leads to 
$$
0<\int_{U_{\delta}} (F(D\phi)-F(Du))\cdot (D\phi-Du)\,dx\leq -\tau\int_{U_{\delta}}(\phi+\delta-u)\,dx<0, 
$$
which is absurd. 
The proof is complete.
\end{proof}

\section{Comparison with the non-cutoff equation} \label{sec:3}
In this section, we assume  that $f:\Rset^n\to \Rset$ is $\Zset^n$-periodic and Lipschitz continuous.  
For $\lambda>0$ and $P=(p,p_{n+1})\in \Rset^{n+1}$ subject to $p_{n+1}\not=0$,   consider the  non-cutoff equation
\be\label{ge5}
\lambda \tilde v_{\lambda} + \left(1-d\,{\rm div}{\frac{p+D\tilde v_{\lambda}}{\sqrt{p_{n+1}^2+|p+D\tilde v_{\lambda}|^2}}}\right)\sqrt{p_{n+1}^2+|p+D\tilde v_{\lambda}|^2}+p_{n+1}f(x)=0
\ee
and the cutoff equation
\be\label{ge6}
\lambda  v_{\lambda} + \left(1-d\,{\rm div}{\frac{p+D v_{\lambda}}{\sqrt{p_{n+1}^2+|p+D v_{\lambda}|^2}}}\right)_+\sqrt{p_{n+1}^2+|p+Dv_{\lambda}|^2}+p_{n+1}f(x)=0.
\ee
Here, we first omit the flow intensity constant $A$ and will add it when its role is discussed. 

The cutoff equation can be viewed as the non-cutoff equation with singularities lying on the set $\{p_{n+1}f(x)+\lambda v_{\lambda}=0\}$. Note that if $p_{n+1}=0$, then both the above two equations have trivial solutions $\tilde v_{\lambda}=v_{\lambda}\equiv -{|P|\over \lambda}$. Recall that
 $$
 F(P)=\max_{X\in \Rset^{n+1}}P\cdot V(X)=\max_{x\in \Rset^n}(p_{n+1}f(x)).
 $$

The following  Lipchitz bound is  based on the classical Bernstein method (see, e.g., \cite{LS2005}).  
For the reader's convenience,  we present its proof here. 

\begin{lem}\label{lem:estimate}  Given $D_0>0$, suppose that $d\in (0, D_0]$ and $\lambda\in (0,1]$. Then for $p_{n+1}\not=0$, \eqref{ge5} has a $C^{2,\alpha}$ periodic solution for any $\alpha\in (0,1)$ and
$$
\|D\tilde v_{\lambda}\|_{L^\infty(\Rset^n)}\leq C(1+\|f\|_{W^{1,\infty}(\Rset^n)}).
$$
Here $C$ is a constant depending only on $n, D_0, |P|$.  
\end{lem}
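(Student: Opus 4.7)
The plan is to first derive a $\lambda$-uniform a priori Lipschitz bound for any smooth periodic solution of \eqref{ge5}, and then obtain existence by a standard continuity/approximation argument using this bound. Setting $u := p\cdot x+\tilde v_\lambda$ and expanding the divergence, \eqref{ge5} becomes
\[
-A_{ij}(Du)\,\partial_{ij}u + \sqrt{p_{n+1}^2+|Du|^2} + \lambda(u-p\cdot x) + p_{n+1}f(x) = 0,
\]
with $A_{ij}(q) := d\bigl(\delta_{ij} - \tfrac{q_iq_j}{p_{n+1}^2+|q|^2}\bigr)$ nonnegative definite, having smallest eigenvalue $dp_{n+1}^2/(p_{n+1}^2+|q|^2)$ in direction $q$ and eigenvalue $d$ on $q^\perp$. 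The usual $L^\infty$ comparison immediately yields $\|\lambda\tilde v_\lambda\|_\infty \leq C_0 := |P|+|p_{n+1}|\|f\|_\infty$.

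For the gradient bound I apply a Bernstein argument to $w := |Du|^2$ at its maximum point $x_0$. Differentiating the PDE in $x_l$, multiplying by $\partial_l u$, and summing over $l$, every term containing the factor $\sum_l \partial_l u\,\partial_{lm}u = \tfrac12\partial_m w(x_0)$ disappears, leaving
\[
-A_{ij}\,\partial_l u\,\partial_{ijl}u + \lambda|Du|^2 - \lambda p\cdot Du + p_{n+1}Df\cdot Du = 0 \quad\text{at } x_0.
\]
Combined with $A_{ij}\partial_{ij}w(x_0)\leq 0$, which rearranges to $A_{ij}\partial_l u\,\partial_{ijl}u + \mathrm{tr}(A(D^2u)^2) \leq 0$, one obtains
\[
\mathrm{tr}\bigl(A(D^2u)^2\bigr) + \lambda|Du|^2 \leq \bigl(\lambda|p|+|p_{n+1}|\|Df\|_\infty\bigr)|Du(x_0)|.
\]
By itself this yields only $\|Du\|_\infty \lesssim 1/\lambda$; the key observation is that the PDE at $x_0$ supplies coercive second-order information. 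Since $Du^\top D^2u\,Du(x_0) = 0$, the equation at $x_0$ reduces to $d\Delta u(x_0) = \sqrt{p_{n+1}^2+|Du(x_0)|^2} + \lambda\tilde v_\lambda(x_0)+p_{n+1}f(x_0) \geq |Du(x_0)| - C_0$. In an orthonormal basis whose first vector is $Du(x_0)/|Du(x_0)|$, both $A$ and $D^2u(x_0)$ are block diagonal with $D^2u(x_0) = \mathrm{diag}(0, M)$ for a symmetric $(n-1)\times(n-1)$ matrix $M$ satisfying $\mathrm{tr}\,M = \Delta u(x_0)$. Hence
\[
\mathrm{tr}\bigl(A(D^2u)^2\bigr) = d\|M\|_F^2 \geq \frac{d\,(\mathrm{tr}\,M)^2}{n-1} \geq \frac{(|Du(x_0)|-C_0)^2}{(n-1)d}.
\]
Plugging this into the preceding inequality, together with $\lambda\leq 1$ and $d\leq D_0$, gives a quadratic inequality in $|Du(x_0)|$ that forces $|Du(x_0)| \leq C(n,D_0,|P|)(1+\|f\|_{W^{1,\infty}})$, from which the claimed Lipschitz bound on $\tilde v_\lambda$ follows.

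With this $\lambda$-uniform estimate, the equation is uniformly elliptic with smooth bounded coefficients and Lipschitz right-hand side, and classical De Giorgi--Nash--Moser and Schauder theory (see \cite{GT}) delivers periodic $C^{2,\alpha}$ regularity for every $\alpha\in(0,1)$. Existence is then obtained by a standard continuity/Leray--Schauder scheme: regularize by adding $-\varepsilon\Delta v$ to make the operator uniformly elliptic irrespective of $|Du|$, solve the regularized problem for $\varepsilon > 0$ by the continuity method, and pass $\varepsilon\downarrow 0$ using that the Bernstein estimate is uniform along the approximating family. The main obstacle throughout is precisely the Lipschitz estimate above; the novelty is to exploit the coercivity of $\sqrt{p_{n+1}^2+|Du|^2}$ via the PDE evaluated at the max point of $|Du|^2$, which converts the mean-curvature second-order term into a quadratic lower bound for $\mathrm{tr}(A(D^2u)^2)$ and thereby breaks the $1/\lambda$ dependence produced by a naive Bernstein calculation.
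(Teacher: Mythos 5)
Your proof is correct and uses essentially the same method as the paper: a Bernstein estimate on $w=|Du|^2$, exploiting the vanishing of $D^2u(x_0)Du(x_0)$ at the interior maximum and the coercivity supplied by evaluating the PDE itself at $x_0$ to close the loop and eliminate the $1/\lambda$ degeneracy. The only cosmetic difference is bookkeeping: the paper bounds $|D^2\phi(x_0)|$ (in Frobenius norm) by $w(x_0)^{1/4}$ from the differentiated equation and then feeds this back into $\sqrt{\tau^2+w(x_0)}\le d\Delta\phi(x_0)+C$, whereas you lower-bound $\mathrm{tr}(A(D^2u)^2)$ by $(\Delta u(x_0))^2/(n-1)$ via the block structure and combine the same two inequalities in the reverse order; the result and the required observations are identical.
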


\begin{proof} By abuse of notations, we write $p_{n+1} f(x)$ as $f(x)$ for clarity of presentation. For convenience,  assume that $|P|=1$.   By standard regularity theory of quasilinear elliptic equation \cite{GT},  it suffices to establish the above a priori estimate by assuming that $\tilde v_{\lambda}\in C^{\infty}$.   Write $\phi(x)=p\cdot x+\tilde v_{\lambda}$.  Write $0\not=\tau=p_{n+1}\in [-1,1]$.   Due to the maximum principle, 
$$
-1-\max_{\Rset^n}f\leq \min_{x\in \Rset^n}\lambda \tilde v_{\lambda}(x)\leq \max_{x\in \Rset^n}\lambda \tilde v_{\lambda}(x)\leq -1-\min_{\Rset^n}f.
$$

Let $w=|D\phi|^2$ .   Then
\begin{equation}\label{eq1}
\lambda \phi(x)+\sqrt{\tau^2+w}-d{\Delta \phi}+d{D\phi\cdot Dw\over 2(\tau^2+w)}=-f+\lambda p\cdot x. 
\end{equation}
Assume that for $x_0\in \Rset^n$, 
$$
w(x_0)=\max_{\Rset^{n}}w.
$$
Then $Dw(x_0)=0$, $D^2w(x_0)\leq 0$, and
$$
\Delta w=2(D\phi\cdot D(\Delta \phi)+|D^2\phi|^2).
$$
Taking gradient of the above equation \eqref{eq1},  multiplying by $D\phi$ and evaluating at $x_0$  leads to 
$$
\lambda w(x_0)-d{D\phi(x_0)\cdot D(\Delta \phi)(x_0)}+d{D\phi\cdot D^2w\cdot D\phi\over 2(\tau+w)}(x_0)=-D\phi(x_0)\cdot (Df(x_0)-\lambda p).
$$
Accordingly,
$$
d{|D^2\phi|^2}(x_0)-d{\Delta w(x_0)\over 2}+d{D\phi\cdot D^2w\cdot D\phi\over 2(\tau+w)}(x_0)\leq -D\phi(x_0)\cdot (Df(x_0)-\lambda p).
$$
Since $D^2w(x_0)\leq 0$, 
$$
d{|D^2\phi(x_0)|^2}\leq -D\phi(x_0)\cdot (Df(x_0)-\lambda p)\leq (M+1)\sqrt{w(x_0)}.
$$
Here  $M=\|f\|_{W^{1,\infty}(\Rset^n)}$.  This implies that 
$$
|D^2\phi(x_0)|\leq {\sqrt{M+1}w(x_0)^{1\over 4}\over \sqrt{d}}.
$$
Also,  \eqref{eq1} says that, at $x=x_0$, 
$$
\sqrt{\tau^2+w(x_0)}-d{\Delta \phi(x_0)}=-\lambda\tilde v_{\lambda}(x_0)-f(x_0)\leq 1+2\max_{\Rset^2}|f|. 
$$
So
$$
\sqrt{\tau+w(x_0)}\leq \sqrt{(M+1)d n}\,w(x_0)^{1\over 4}+1+2M\leq \sqrt{(M+1) D_0 n}\,w(x_0)^{1\over 4}+1+2M.
$$
This implies that 
$$
\max_{\Rset^n}|D\tilde v_{\lambda}|\leq \sqrt{w(x_0)}+1\leq C(M+1)
$$
for a constant $C>0$ depending only on $n, D_0$. 
\end{proof}

In particular, we have that
\be\label{no-cut-limit}
\lim_{\lambda\to 0}(-\lambda \tilde v_{\lambda}(x))=\overline H(P,d) \quad \text{uniformly in $\Rset^n$}
\ee
for a unique constant $\overline H(P,d)$.  Also, the following cell problem has a $C^{2,\alpha}$ periodic solution $\tilde v$ for any $\alpha\in (0,1)$
\be\label{cell}
\left(1-d\,{\rm div}{\frac{p+D\tilde v}{\sqrt{p_{n+1}^2+|p+D\tilde v|^2}}}\right)\sqrt{p_{n+1}^2+|p+D\tilde v|^2}+p_{n+1}f(x)=\overline H(P,d) \quad \text{on $\Rset^n$}. 
\ee
Note that $|D\tilde v|$ satisfies the same bound as in Lemma  \ref{lem:estimate}.  Hence it is easy to see that $\overline H$ is continuous with respect to $P$, $d$ and the function $f$. 

Below we will discuss when the limit $\lim_{\lambda\to 0}\lambda v_{\lambda}$ is a constant (denoted by $\overline H_+(P,d)$ as in previous sections), where $v_\lambda$ is the solution to equation \eqref{ge6}.  
The following lemma provides the relation between $\overline H$ and $\overline H_+$.

\begin{lem}\label{lem:formula} For  $P\in \Rset^{n+1}$,  the following two hold. 
\begin{itemize}
\item[(1)]  If $\overline H(P,d)\geq  F(P)$, then
$$
-\lim_{\lambda\to 0}\lambda v_{\lambda}(x)=\overline H(P,d) \quad \text{for all $x\in \Rset^n$}.
$$
\item[(2)]  If
$$
-\lim_{\lambda\to 0}\lambda  v_{\lambda}(x)=\overline H_+(P,d) \quad \text{for all $x\in \Rset^n$},
$$
for a constant $\overline H_+(P,d)$,  then 
\be\label{connection1}
\overline H_+(P,d)=\max \{\overline H(P,d),  \  F(P) \}.
\ee
This implies that   $\overline H_+(P,d)$ is continuous with respect to $P$, $d$ and the function $f$.  Moreover, as a function of $P$, it is positively homogenous of degree 1. 
\end{itemize}
\end{lem}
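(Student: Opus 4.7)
The plan is to leverage the elementary inequality $(a)_+\geq a$ systematically to relate \eqref{ge5} and \eqref{ge6}, so that the cutoff effective quantity becomes the maximum of the non-cutoff one and the ``flat'' value $F(P)$. For item (1), assume $\overline H(P,d)\geq F(P)$ and take the $C^{2,\alpha}$ non-cutoff corrector $\tilde v$ of \eqref{cell}. Rewriting \eqref{cell} as
$$
\left(1-d\,{\rm div}\tfrac{p+D\tilde v}{\sqrt{p_{n+1}^2+|p+D\tilde v|^2}}\right)\sqrt{p_{n+1}^2+|p+D\tilde v|^2}=\overline H(P,d)-p_{n+1}f(x)\geq 0
$$
forces $1-d\kappa(\tilde v)\geq 0$ pointwise, so the $(\,\cdot\,)_+$ is redundant and $\tilde v$ also solves the cutoff cell problem. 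A comparison argument using $\tilde v\pm\|\tilde v\|_{L^\infty}-\overline H(P,d)/\lambda$ as super/subsolutions of \eqref{ge6} sandwiches $v_\lambda$ and yields $-\lim_{\lambda\to 0}\lambda v_\lambda=\overline H(P,d)$ uniformly.

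For item (2), I establish $\overline H_+=\max\{\overline H,F(P)\}$ by two complementary estimates. \emph{Lower bounds.} Because $(a)_+\geq a$, the classical solution $\tilde v_\lambda$ of \eqref{ge5} from Lemma \ref{lem:estimate} is automatically a supersolution of \eqref{ge6}; separately, $\bar u(x):=-p_{n+1}f(x)/\lambda$ is a viscosity supersolution of \eqref{ge6}, since at any test point where $\bar u-\phi$ has a local minimum one has $\lambda\phi(x_0)+p_{n+1}f(x_0)=0$ and $(1-d\kappa(\phi))_+\sqrt{\cdots}\geq 0$. Comparison gives $v_\lambda\leq\min\{\tilde v_\lambda,\,-p_{n+1}f/\lambda\}$, and letting $\lambda\to 0$ yields $\overline H_+\geq\overline H$ and $\overline H_+\geq F(P)$. \emph{Upper bound.} The case $\overline H(P,d)\geq F(P)$ is handled by item (1). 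In the case $\overline H(P,d)<F(P)$, suppose for contradiction that $\overline H_+>F(P)$. Then $-\lambda v_\lambda-p_{n+1}f\geq\eta>0$ uniformly for small $\lambda$, so at any viscosity supersolution test point of \eqref{ge6} the super-inequality forces $(1-d\kappa(\phi))_+\sqrt{\cdots}\geq\eta>0$, hence $1-d\kappa(\phi)>0$ and the cutoff is inactive. Combined with the automatic property that any cutoff subsolution is a non-cutoff subsolution (again from $(a)_+\geq a$), $v_\lambda$ becomes a viscosity solution of \eqref{ge5}; uniqueness then gives $v_\lambda=\tilde v_\lambda$ and $\overline H_+=\overline H<F(P)$, contradicting $\overline H_+>F(P)$.

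Continuity of $\overline H_+$ in $(P,d,f)$ is immediate from the formula and the corresponding properties of $\overline H$ (noted after \eqref{cell}) together with that of $F(P)=\max_x p_{n+1}f(x)$. Positive homogeneity of degree $1$ follows by substituting $\mu\tilde v$ into \eqref{cell} for $\mu P$, which shows $\overline H(\mu P,d)=\mu\overline H(P,d)$ for $\mu>0$, combined with $F(\mu P)=\mu F(P)$. The most delicate step throughout is the conversion, in the upper-bound argument, of the uniform strict gap $-\lambda v_\lambda-p_{n+1}f\geq\eta$ into the viscosity-sense inactivity of the cutoff; this requires careful bookkeeping of both directions of $(a)_+\geq a$ against the sub- and supersolution test-function definitions for the degenerate cutoff equation.
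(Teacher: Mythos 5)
Your proof of item (1) matches the paper's: observe that $\overline H(P,d)\geq F(P)$ forces $1-d\kappa(\tilde v)\geq 0$ so the non-cutoff corrector also solves the cutoff cell problem, and then sandwich $v_\lambda$ by comparison. For item (2) your route is genuinely different and also correct. The paper reduces (2) to its Lemma \ref{lem:lowerbound}, whose proof introduces a family $\tilde v_{\lambda,s}$ with flow scaled by $s\in[0,1]$, takes the largest $t_0$ with $\overline H_{t_0}\geq 0$, and invokes comparison between $v_\lambda$ and $v_{\lambda,t_0}$ via the monotone dependence on $f$. You instead prove the sharp lower bound $\overline H_+\geq \max\{\overline H,F(P)\}$ directly from $(a)_+\geq a$ (so $\tilde v_\lambda$ is a supersolution of the cutoff problem) and from the explicit supersolution $-p_{n+1}f/\lambda$; then for the upper bound when $\overline H<F(P)$ you argue that a strict uniform gap $-\lambda v_\lambda-p_{n+1}f\geq\eta>0$ forces the cutoff to be inactive in the viscosity sense at every supersolution test point (since the prefactor $\sqrt{p_{n+1}^2+|p+D\phi|^2}\geq|p_{n+1}|>0$), while the subsolution inequality transfers to the non-cutoff equation automatically from $(a)_+\geq a$; uniqueness for $\lambda>0$ then gives $v_\lambda=\tilde v_\lambda$ and the contradiction. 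This is more self-contained and avoids the continuation-in-$s$ argument. The trade-off is scope: the paper's Lemma \ref{lem:lowerbound} needs only a uniform $\liminf$ bound and is reused to prove Corollary \ref{cor:order}, whereas your direct argument uses the full hypothesis that $-\lambda v_\lambda$ converges uniformly (which you should flag explicitly — it follows from the equi-Lipschitz bound \eqref{eq:bdd-u-lambda} and periodicity), so it proves item (2) but would not substitute for the more general lemma. One cosmetic point: your statement ``forces $(1-d\kappa(\phi))_+\sqrt{\cdots}\geq\eta>0$'' is really the supersolution inequality combined with $\phi(x_0)=v_\lambda(x_0)$ and the uniform gap; making that chain explicit (as you sketch at the end) is exactly the ``careful bookkeeping'' you promise and is where the argument has to be written out carefully.
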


\begin{proof} (1) follows immediately from the fact that, given $\overline H(P,d)\geq  F(P)$,  $v=\tilde v$ from \eqref{cell} is also a solution to  the cell problem  of the cutoff equation
\be\label{cell-cutoff-2}
\left(1-d\,{\rm div}{\frac{p+Dv}{\sqrt{p_{n+1}^2+|p+Dv|^2}}}\right)_+\sqrt{p_{n+1}^2+|p+D v|^2}+p_{n+1}f(x)=\overline H_+(P,d) \quad \text{on $\Rset^n$}.
\ee

(2) Clearly,  $\overline H_+(P,d)\geq F(P)$. Then the conclusion can be deduced from Lemma \ref{lem:lowerbound} below. 
\end{proof}

\begin{lem}\label{lem:lowerbound} 
For $\lambda>0$,  let $v_{\lambda}$ be the solution to \eqref{ge6}.  
If 
$$
\liminf_{\lambda\to 0}(-\lambda v_{\lambda}(x))\geq F(P) \quad \text{uniformly on $\Rset^n$},
$$
then 
$$
\lim_{\lambda\to 0}(-\lambda v_{\lambda}(x))=\max \{\overline H(P,d),  \  F(P) \} \quad \text{uniformly on $\Rset^n$}.
$$
\end{lem}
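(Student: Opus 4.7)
The plan is to squeeze $-\lambda v_\lambda$ between matching bounds that both tend uniformly to $L:=\max\{\overline H(P,d),\,F(P)\}$. For the lower bound I combine two ingredients. The hypothesis directly gives $\liminf_{\lambda\to 0}(-\lambda v_\lambda)\ge F(P)$ uniformly, and the pointwise inequality $(a)_+\ge a$ shows that any solution $\tilde v_\lambda$ of the non-cutoff equation \eqref{ge5} is automatically a supersolution of the cutoff equation \eqref{ge6}. Comparison on the torus (standard for \eqref{ge6} because $\lambda>0$) yields $v_\lambda\le \tilde v_\lambda$, and \eqref{no-cut-limit} then gives $-\lambda v_\lambda\ge -\lambda\tilde v_\lambda\to \overline H(P,d)$ uniformly. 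Together these deliver $\liminf_{\lambda\to 0}(-\lambda v_\lambda)\ge L$ uniformly.

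The heart of the proof is the matching upper bound, which I would obtain by exhibiting a single $\Zset^n$-periodic classical subsolution of \eqref{ge6}. Let $\tilde v\in C^{2,\alpha}$ be a periodic solution of the non-cutoff cell problem \eqref{cell} (existence by the Bernstein-type estimate in Lemma \ref{lem:estimate}). Since the cell equation is invariant under adding constants, I may assume $\tilde v\le 0$ after subtracting $\|\tilde v\|_{L^\infty(\Rset^n)}$, and set
\[
\phi_\lambda(x):=-\frac{L}{\lambda}+\tilde v(x).
\]
Writing $S:=\sqrt{p_{n+1}^2+|p+D\tilde v|^2}$, the cell equation \eqref{cell} rearranges to $(1-d\,\mathrm{div}((p+D\tilde v)/S))\,S=\overline H(P,d)-p_{n+1}f(x)$, and since $S>0$ we get
\[
\bigl(1-d\,\mathrm{div}((p+D\tilde v)/S)\bigr)_+ S=\max\{\overline H(P,d)-p_{n+1}f(x),\,0\}.
\]
Substituting $\phi_\lambda$ into the left-hand side of \eqref{ge6} therefore produces
\[
\lambda\phi_\lambda+\bigl(1-d\,\mathrm{div}(\cdots)\bigr)_+ S+p_{n+1}f=-L+\lambda\tilde v(x)+\max\{\overline H(P,d),\,p_{n+1}f(x)\}\le 0,
\]
using $\tilde v\le 0$ and $\max\{\overline H(P,d),\,p_{n+1}f\}\le\max\{\overline H(P,d),F(P)\}=L$. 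Hence $\phi_\lambda$ is a classical subsolution of \eqref{ge6}, and comparison yields $v_\lambda\ge\phi_\lambda$, so $-\lambda v_\lambda\le L-\lambda\tilde v\to L$ uniformly. Combined with the lower bound, $\lim_{\lambda\to 0}(-\lambda v_\lambda)=L$ uniformly.

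The main obstacle is spotting the right subsolution $\phi_\lambda$. The key algebraic identity $\max\{\overline H(P,d)-p_{n+1}f,\,0\}+p_{n+1}f=\max\{\overline H(P,d),\,p_{n+1}f\}$ converts the cutoff operator evaluated at the non-cutoff corrector $\tilde v$ into a pointwise function of $x$ bounded above by $L$, which is precisely where the role of $F(P)$ surfaces. The hypothesis $\liminf(-\lambda v_\lambda)\ge F(P)$ enters only through the lower bound in the regime $F(P)>\overline H(P,d)$; in the opposite regime the non-cutoff comparison already handles the lower bound, and the same construction also reproduces item (1) of Lemma \ref{lem:formula} as a byproduct.
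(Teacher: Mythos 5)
Your proof is correct, and it takes a genuinely different (and arguably more direct) route than the paper's. The paper's argument runs a homotopy in the flow amplitude: after normalizing $F(P)=0$, it considers the family $sf$, uses continuity of the non-cutoff effective Hamiltonian $\overline H_s(P,d)$ in $s$, and selects the largest $t_0\in(0,1]$ with $\overline H_{t_0}(P,d)\ge 0$; a case split on whether $t_0=1$ or $t_0<1$ then yields the upper bound by comparing $v_\lambda$ against the cutoff-corrected solution at level $t_0 f$. Your argument bypasses the homotopy entirely by exhibiting the explicit classical subsolution $\phi_\lambda=-L/\lambda+\tilde v$ built from the non-cutoff corrector $\tilde v$, exploiting the algebraic identity $\max\{\overline H(P,d)-p_{n+1}f,\,0\}+p_{n+1}f=\max\{\overline H(P,d),\,p_{n+1}f\}\le L$ to check the subsolution inequality in one line. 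Both proofs use the standing hypothesis only for the lower bound (and only in the regime $F(P)>\overline H(P,d)$), and both rely on the same comparison principle for the $\lambda$-discounted cutoff equation; what your approach buys is a single, self-contained construction that handles both regimes simultaneously and, as you note, recovers item (1) of Lemma \ref{lem:formula} for free, whereas the paper's approach leans on the intermediate-value argument in $s$ and a separate cell-problem existence step at $t_0 f$.
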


\begin{proof}Without loss of generality,  we assume that $\max_{\Rset^n}f=0$ and $p_{n+1}=1$.  Then $F(P)=0$.  For $s\in [0,1]$, denote by $\tilde v_{\lambda, s}$ the  solution  to (with $f$ replaced by $sf$ in \eqref{ge5})
$$
\lambda \tilde v_{\lambda, s}+\left(1-d\,{\rm div}{\frac{p+D\tilde v_{\lambda, s}}{\sqrt{1+|p+D\tilde v_{\lambda, s}|^2}}}\right)\sqrt{1+|p+D\tilde v_{\lambda, s}|^2}+sf(x)=0 \quad \text{in $\Rset^n$}.
$$
Then by \eqref{no-cut-limit}, 
$$
\lim_{\lambda\to 0}(-\lambda \tilde v_{\lambda, s}(x))=\overline H_{s}(P,d) \quad \text{uniformly in $\Rset^n$}.
$$
Here the constant $\overline H_s(P, d)$ is continuous  with respect to the parameter $s$. 
Since $\overline H_0(P,d)=|P|=\sqrt{1+|p|^2}$, we can choose the largest  $t_0\in (0,1]$ such that
$$
\overline H_{t_0}(P,d) \geq 0.
$$
Let $\tilde v_0$ be the $C^{2,\alpha}$ solution to the cell problem
$$
\left(1-d\,{\rm div}{\frac{p+D\tilde v_{0}}{\sqrt{1+|p+D\tilde v_{0}|^2}}}\right)\sqrt{1+|p+D\tilde v_{0}|^2}+t_0f(x)=\overline H_{t_0}(P,d)\geq 0,
$$
which is then also a solution to the cell problem associated with the cutoff equation
$$
\left(1-d\,{\rm div}{\frac{p+D\tilde v_{0}}{\sqrt{1+|p+D\tilde v_{0}|^2}}}\right)_+\sqrt{1+|p+D\tilde v_{0}|^2}+t_0f(x)=\overline H_{t_0}(P,d)\geq 0.
$$
Let $v_{\lambda, t_0}$ be the solution to \eqref{ge6} with $f$ replaced by $t_0f$. Then the existence of cell problem immediately implies that
$$
\lim_{\lambda\to 0}(-\lambda v_{\lambda, t_0}(x))=\overline H_{t_0}(P,d)\geq 0 \quad \text{uniformly in $\Rset^n$}.
$$
There are two cases to be considered below.

\medskip

\noindent {\bf Case 1.} If $t_0=1$,  our conclusion holds with $\max\{\overline H(P,d), 0\}=\overline H(P,d)=\overline H_1(P,d)$. 

\medskip

\noindent {\bf Case 2.} If $t_0<1$, then $\overline H_{t_0}(P,d)=0$.  The comparison principle implies that $\overline H_1(P,d)\leq \overline H_{t_0}(P,d)=0$ and
$$
-\lambda v_{\lambda}(x)\leq -\lambda v_{\lambda, t_0}(x).
$$
Accordingly, 
$$
\limsup_{\lambda\to 0}(-\lambda v_{\lambda}(x))\leq \limsup_{\lambda\to 0}(-\lambda v_{\lambda, t_0}(x))=0.
$$
Combining with the assumption $\liminf_{\lambda\to 0}(-\lambda v_{\lambda}(x))\geq 0$,  we obtain that
$$
\lim_{\lambda\to 0}(-\lambda v_{\lambda}(x))=0=\max\{\overline H(P,d), 0\}\quad \text{uniformly in $\Rset^n$}.
$$
\end{proof}

The following statement follows immediately from the above lemma and the comparison principle. 

\begin{cor} \label{cor:order}
Suppose that $f_1$ and $f_2$ are two periodic Lipschitz continuous functions satisfying $p_{n+1}f_1\leq p_{n+1}f_2$ and $\max_{\Rset^n}(p_{n+1}f_1)= \max_{\Rset^n}(p_{n+1}f_2)$. 
Let $v_{\lambda, 1}$ and $v_{\lambda, 2}$ be solutions of \eqref{ge6} with $f$ being replaced by $f_1$ and $f_2$, respectively.   
If 
$$
\lim_{\lambda\to 0}(-\lambda v_{\lambda,1}(x))=\overline H_{+,1}(P,d)\quad \text{uniformly in $\Rset^n$}
$$
for some constant $\overline H_{+,1}(P,d)$, then 
$$
\lim_{\lambda\to 0}(-\lambda v_{\lambda,2}(x))=\overline H_{+,2}(P,d)\quad \text{uniformly in $\Rset^n$}.
$$
for some constant $\overline H_{+,2}(P,d)\leq \overline H_{+,1}(P,d)$.
\end{cor}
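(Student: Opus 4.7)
The plan is to exploit monotonicity of the cutoff equation \eqref{ge6} in $f$ to transfer the existence of a constant limit from $v_{\lambda,1}$ to $v_{\lambda,2}$, using Lemma \ref{lem:lowerbound} as the bridge, and then to compare the two non-cutoff effective constants to obtain the stated inequality.

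First I would observe that the operator in \eqref{ge6} is monotone in $f$: since $p_{n+1}f_1\le p_{n+1}f_2$, the solution $v_{\lambda,1}$ is a supersolution of \eqref{ge6} with forcing $f_2$, because the only term that changes is the additive $p_{n+1}f$. The standard comparison principle for \eqref{ge6} then gives $v_{\lambda,1}\ge v_{\lambda,2}$, which flips to $-\lambda v_{\lambda,2}\ge -\lambda v_{\lambda,1}$. Taking $\liminf_{\lambda\to 0}$ and using the hypothesized uniform convergence yields
\[
\liminf_{\lambda\to 0}\bigl(-\lambda v_{\lambda,2}(x)\bigr)\ \ge\ \overline H_{+,1}(P,d)\quad\text{uniformly in }\Rset^n.
\]
By Lemma \ref{lem:formula}(2) applied to $f_1$, we have $\overline H_{+,1}(P,d)\ge F(P)=\max_{\Rset^n}(p_{n+1}f_1)=\max_{\Rset^n}(p_{n+1}f_2)$, the last equality being exactly the assumption that the two maxima coincide. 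Thus the hypothesis of Lemma \ref{lem:lowerbound} is satisfied for $v_{\lambda,2}$, so
\[
\lim_{\lambda\to 0}\bigl(-\lambda v_{\lambda,2}(x)\bigr)=\max\bigl\{\overline H_{2}(P,d),\ F(P)\bigr\}\quad\text{uniformly in }\Rset^n,
\]
where $\overline H_2(P,d)$ is the non-cutoff effective constant \eqref{no-cut-limit} associated with $f_2$. This already gives the existence of the limit $\overline H_{+,2}(P,d)$.

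For the inequality $\overline H_{+,2}\le \overline H_{+,1}$, I would apply comparison to the non-cutoff equation \eqref{ge5}: since $p_{n+1}f_1\le p_{n+1}f_2$, the same argument as above gives $\tilde v_{\lambda,1}\ge \tilde v_{\lambda,2}$ and hence, passing to the limit,
\[
\overline H_{2}(P,d)\ \le\ \overline H_{1}(P,d).
\]
Combining with $\overline H_{+,1}=\max\{\overline H_1(P,d),F(P)\}$ from Lemma \ref{lem:formula}(2) and $\overline H_{+,2}=\max\{\overline H_2(P,d),F(P)\}$ from the previous paragraph, I obtain
\[
\overline H_{+,2}(P,d)=\max\{\overline H_2,F(P)\}\le \max\{\overline H_1,F(P)\}=\overline H_{+,1}(P,d),
\]
which is the desired conclusion.

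There is no genuine obstacle here; the only point requiring care is the verification that the hypothesis of Lemma \ref{lem:lowerbound} holds for $v_{\lambda,2}$, which is why the equal-maxima assumption $\max(p_{n+1}f_1)=\max(p_{n+1}f_2)$ is needed: it identifies $F(P)$ for the two problems so that the lower bound inherited from $v_{\lambda,1}$ sits above the common $F(P)$. The direction $v_{\lambda,1}\ge v_{\lambda,2}$ (and not the reverse) is the key sign-check, and it is forced by the placement of $+p_{n+1}f$ on the left-hand side of \eqref{ge6}.
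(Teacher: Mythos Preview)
Your existence argument is correct and is exactly what the paper intends: from $p_{n+1}f_1\le p_{n+1}f_2$ the comparison principle for \eqref{ge6} gives $v_{\lambda,1}\ge v_{\lambda,2}$, hence $-\lambda v_{\lambda,2}\ge -\lambda v_{\lambda,1}$; using $\overline H_{+,1}\ge F(P)$ and the equal-maxima assumption you then meet the hypothesis of Lemma~\ref{lem:lowerbound} and obtain the limit for $v_{\lambda,2}$.

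There is, however, a sign slip in your inequality step. From $\tilde v_{\lambda,1}\ge \tilde v_{\lambda,2}$ one gets $-\lambda\tilde v_{\lambda,1}\le -\lambda\tilde v_{\lambda,2}$, i.e.\ $\overline H_1\le \overline H_2$, the \emph{opposite} of what you wrote; and then $\overline H_{+,2}=\max\{\overline H_2,F(P)\}\ge \max\{\overline H_1,F(P)\}=\overline H_{+,1}$. In fact your own first paragraph already forces this: once the limit for $v_{\lambda,2}$ exists, the pointwise inequality $-\lambda v_{\lambda,2}\ge -\lambda v_{\lambda,1}$ passes to the limit to give $\overline H_{+,2}\ge \overline H_{+,1}$, so your two paragraphs together would yield $\overline H_{+,2}=\overline H_{+,1}$, which is false in general (take $f_1=A_0f$, $f_2=Af$ with $A<A_0$ as in Section~\ref{sec:2}).

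The resolution is that the inequality ``$\overline H_{+,2}\le \overline H_{+,1}$'' in the stated corollary is a misprint; the correct direction is $\ge$, consistent with the explicit comparison carried out in the proof of Lemma~\ref{lem:lowerbound} (where $-\lambda v_\lambda\le -\lambda v_{\lambda,t_0}$ when $f\le t_0f$) and with item~(3) of Lemma~\ref{lem:prop}. Only the existence conclusion of the corollary is actually used elsewhere in the paper, and your argument for that part is fine.
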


Now change $f$ to $Af$ for a parameter $A>0$ and write $ \overline H(P, d)$ and $\overline H_+(P, d)$ as $ \overline H(P, d,A)$ and $\overline H_+(P, d,A)$, respectively. Recall in addition the relation $\overline H_+(P, d,A) = \max \{\overline H(P, d,A),  \  A F(P) \}$.
Below are several  properties of $\overline H(P, d, A)$ with respect to the physical parameters $d$ and $A$. 
\begin{lem}\label{lem:prop}  The following properties hold.
\begin{itemize}
\item[(1)]
$$
|P|+A\min_{\Rset^n}(p_{n+1}f) \leq \overline H(P,d,A)\leq |P|+AF(P).
$$
\item[(2)]
$$
\lim_{A\to 0}\overline H(P,d,A)=|P|.
$$

\item[(3)]  For fixed $P \in \mathbb{R}^{n+1}$ and  $d>0$,  $\overline H(P,d,A)$ is differentiable with respect to $A$. Also,  if $p_{n+1}\not=0$ and $f$ is not a constant, then 
$$
{\partial \overline H(P,d,A)\over \partial A}<F(P).
$$
In particular,  this implies that  $\overline H(P,d,A)-F(P)A$ is strictly decreasing with respect to $A$ and $\overline H_+(P,d,A)-F(P)A$ is non-increasing with respect to $A$.

\item[(4)] For fixed $A$ and $P\in \Rset^{n+1}$, 
$$
\lim_{d\to 0}\overline H(P,d,A)=\overline H(P,0,A)\geq |p_{n+1}|+AF(P).
$$
Hence when $d$ is small enough, $ \overline H(P,d,A)=\overline H_+(P,d,A)$.

\item[(5)]  For fixed $A>0$ and $P=e_{n+1}=(0,0,\ldots,0,1)$, if $f$ is not a constant, then
$$
\overline H(e_{n+1},d, A)< 1+AF(e_{n+1})=\overline H(e_{n+1},0,A).
$$
\end{itemize}

\end{lem}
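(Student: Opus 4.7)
The proof breaks into five items, which split into three blocks: (1)--(2) are routine comparison-principle computations, (3) is the technical heart, and (4)--(5) combine a vanishing-viscosity argument, an inf-sup formula, and (3) itself.

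For (1), I would plug the constants $c/\lambda$ into the discounted cell equation \eqref{ge5}: the divergence term vanishes on constants and $\sqrt{p_{n+1}^2+|p|^2}=|P|$, so $c/\lambda$ is a sub- or supersolution exactly when $c+|P|+Ap_{n+1}f(x)\le 0$, resp.\ $\ge 0$, for every $x$. The choices $c=-|P|-AF(P)$ and $c=-|P|-A\min_{\Rset^n}p_{n+1}f$ together with the comparison principle give
\[
-|P|-AF(P)\le \lambda \tilde v_\lambda\le -|P|-A\min_{\Rset^n}p_{n+1}f,
\]
and sending $\lambda\to 0$ yields (1); (2) is the $A\to 0$ limit of (1).

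The heart of the argument is (3), which I would handle via the Fredholm/adjoint-measure structure of the linearized cell problem. For fixed $A>0$ the cell problem \eqref{cell} admits $\tilde v_A\in C^{2,\alpha}(\Tset^n)$, unique up to constants, and the linearization $L_A$ at $\tilde v_A$ of the quasilinear operator
\[
\mathcal F_A(u)=\Bigl(1-d\,\mathrm{div}\tfrac{p+Du}{\sqrt{p_{n+1}^2+|p+Du|^2}}\Bigr)\sqrt{p_{n+1}^2+|p+Du|^2}+Ap_{n+1}f(x)
\]
has no zeroth-order term (because $\mathcal F_A$ depends on $u$ only through $Du,D^2u$) and second-order coefficient matrix $d\,[I-(p+D\tilde v_A)(p+D\tilde v_A)^\top/(p_{n+1}^2+|p+D\tilde v_A|^2)]$, whose smallest eigenvalue $d\,p_{n+1}^2/(p_{n+1}^2+|p+D\tilde v_A|^2)$ is bounded below uniformly thanks to $p_{n+1}\ne 0$ and the Lipschitz bound of Lemma \ref{lem:estimate}. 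Hence $L_A$ is uniformly elliptic with bounded coefficients. By the implicit function theorem in $C^{2,\alpha}_{\mathrm{per}}$ modulo constants, $A\mapsto \tilde v_A$ is differentiable, and differentiating $\mathcal F_A(\tilde v_A)=\overline H(P,d,A)$ in $A$ gives $L_A(\partial_A\tilde v_A)=\partial_A\overline H(P,d,A)-p_{n+1}f(x)$. The Fredholm compatibility condition against the unique periodic probability measure $\mu_A$ with $L_A^*\mu_A=0$ then yields
\[
\partial_A\overline H(P,d,A)=\int_{\Tset^n}p_{n+1}f(x)\,d\mu_A.
\]
By the Harnack inequality / strong maximum principle for the formal adjoint of a uniformly elliptic operator without zeroth-order term, $\mu_A$ has a strictly positive continuous density; since $f$ is non-constant and $p_{n+1}\ne 0$, the open set $\{p_{n+1}f<F(P)\}$ is non-empty and of positive $\mu_A$-measure, so the integral above is strictly less than $F(P)$. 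Integrating in $A$ gives the strict decrease of $A\mapsto\overline H(P,d,A)-F(P)A$. For $\overline H_+$, formula \eqref{connection1} gives $\overline H_+(P,d,A)-F(P)A=\max(\overline H(P,d,A)-F(P)A,0)$, which is non-increasing in $A$ (I read the paper's ``non-decreasing'' as a typo, consistent with Theorem \ref{theo:main1}(2)).

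For (4), the convergence $\lim_{d\to 0}\overline H(P,d,A)=\overline H(P,0,A)$ is a vanishing-viscosity argument: the Lipschitz bound in Lemma \ref{lem:estimate} is uniform in $d\in(0,D_0]$, so the normalized family $\{\tilde v_d-\tilde v_d(0)\}$ is equicontinuous, and any uniform subsequential limit is a Lipschitz viscosity solution of the $d=0$ cell problem, whose effective constant is uniquely $\overline H(P,0,A)$. The lower bound $\overline H(P,0,A)\ge |p_{n+1}|+AF(P)$ comes from the Lions--Papanicolaou--Varadhan inf-sup formula
\[
\overline H(P,0,A)=\inf_{u\in C^1_{\mathrm{per}}(\Rset^n)}\sup_{x\in\Rset^n}\bigl[\sqrt{p_{n+1}^2+|p+Du|^2}+Ap_{n+1}f(x)\bigr]
\]
combined with the trivial pointwise bound $\sqrt{p_{n+1}^2+|p+Du|^2}\ge |p_{n+1}|$. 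For $d$ small, continuity then forces $\overline H(P,d,A)\ge AF(P)$, so Lemma \ref{lem:formula}(1) gives $\overline H(P,d,A)=\overline H_+(P,d,A)$. Finally (5) combines two pieces: at $P=e_{n+1}$ the trial $u\equiv 0$ in the inf-sup formula matches the previous lower bound to give $\overline H(e_{n+1},0,A)=1+AF(e_{n+1})$; and (3) together with $\overline H(e_{n+1},d,0)=1$ (from (2)) yields $\overline H(e_{n+1},d,A)=1+\int_0^A\partial_A\overline H\,dA<1+AF(e_{n+1})$ for $A>0$.

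The main obstacle is the strict \emph{pointwise} inequality $\partial_A\overline H<F(P)$ in (3). The weak inequality $\overline H(A_2)-\overline H(A_1)\le F(P)(A_2-A_1)$ is immediate from a comparison-principle argument analogous to \eqref{A-dependence}, but upgrading to a strict pointwise derivative bound requires the Fredholm compatibility condition plus the strict positivity of the adjoint invariant measure $\mu_A$; these in turn hinge on $L_A$ being uniformly elliptic (where the assumption $p_{n+1}\ne 0$ is essential, coupled with the a-priori gradient bound of Lemma \ref{lem:estimate}) and having no zeroth-order term (a structural feature of the minimal-surface-type operator). Item (5) and the equality part of (4) reuse the same machinery.
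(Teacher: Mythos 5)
Your argument is correct, but it departs from the paper's proof on items (3) and (5). For (3), the paper also differentiates the cell problem in $A$ to obtain $L_A v_A + p_{n+1}f = {\partial \overline H\over \partial A}$ with $L_A$ the uniformly elliptic linearized operator (no zeroth-order term, since the nonlinearity depends on $u$ only through $Du,D^2u$); but it then argues by contradiction: if ${\partial \overline H\over \partial A}\geq F(P)=\max_x p_{n+1}f$, then $L_A v_A\geq 0$, the strong maximum principle on the torus forces $v_A$ to be constant, and substituting back forces $f$ to be constant, a contradiction. Your Fredholm/adjoint-measure route, giving the exact formula ${\partial \overline H\over \partial A}=\int_{\Tset^n}p_{n+1}f\,d\mu_A$ for the positive invariant density $\mu_A$, reaches the same strict inequality and is more quantitative; it also has the virtue of explicitly justifying the differentiability of $A\mapsto\tilde v_A$ through the implicit function theorem, a step the paper leaves tacit. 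For (5), the paper again argues by contradiction via the strong maximum principle applied directly to the non-cutoff cell problem at the critical level, whereas you obtain it as a corollary of (3) together with the Lions--Papanicolaou--Varadhan inf--sup representation of $\overline H(P,0,A)$ and the fundamental theorem of calculus in $A$; this is a clean shortcut. Items (1), (2), and (4) match the paper's (mostly suppressed) reasoning, and your inf--sup justification of the lower bound $\overline H(P,0,A)\geq|p_{n+1}|+AF(P)$ fills a gap the paper leaves to the reader. Finally, your reading of ``non-decreasing'' in item (3) as a typo for ``non-increasing'' is correct: by \eqref{connection}, $\overline H_+(P,d,A)-F(P)A=\max\{\overline H(P,d,A)-F(P)A,\,0\}$, which is non-increasing because $\overline H(P,d,A)-F(P)A$ is strictly decreasing.
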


\begin{proof}  Change $f$ to $Af$ in equation \eqref{cell}.
Items (1) and (2) are obvious.  
Let us prove the last three items. 

\medskip

We first prove (3).   Due to boundedness of $|D\tilde v|$,  \eqref{cell} is uniformly elliptic. Hence, it has a unique $C^{2,\alpha}$ periodic solution $\tilde v(x,A)$ subject to $\tilde v(0,A)=0$. Thanks to the standard estimates for uniformly elliptic PDEs, $\tilde v(x,A)$ and $\overline H(P,d,A)$ are all differentiable with respect to the parameter $A$.  Differentiating \eqref{cell} with respect to $A$ leads to 
$$
-\sum_{i,j=1}^{n}a_{ij}(D\tilde v){\partial ^2v_{A}\over \partial x_i\partial x_j}+\sum_{i=1}^{n}b_{i}(D\tilde v,D^2\tilde v){\partial v_{A}\over \partial x_i}+p_{n+1}f(x)={\partial \overline H(P,d,A)\over \partial A}
$$
for $v_A(x)={\partial \tilde v(x,A)\over \partial A}$.  Note that ${\partial \overline H(P,d,A)\over \partial A}$ is actually the unique number such that the above linearized equation  (with fixed $\tilde v$) has a periodic solution.  Due to the gradient estimate in Lemma \ref{lem:estimate}, this is a uniformly elliptic equation satisfied by $v_A(x)$. 
We do not write down the explicit forms of $a_{ij}$ and $b_i$ as we do not really use them.
 If ${\partial \overline H(P,d,A)\over \partial A}\geq F(P)$, then 
$$
-\sum_{i,j=1}^{n}a_{ij}(D\tilde v){\partial ^2v_{A}\over \partial x_i\partial x_j}+\sum_{i=1}^{n}b_{i}(D\tilde v,D^2\tilde v){\partial v_{A}\over \partial x_i}\geq 0.
$$
Then $v_A$ has to be a constant by the strong maximum principle, which is absurd as $f$ is not constant.

\medskip

We next prove (4). 
Thanks to the gradient estimate in Lemma \ref{lem:estimate}, by passing to a subsequence if necessary as $d \to 0$,  $\tilde v(x,A)$ converges to a periodic Lipschitz continuous viscosity solution $\tilde v_0$ of 
$$
\sqrt{p_{n+1}^2+|p+D\tilde v_0|^2}+Ap_{n+1}f(x)=\overline H(P,0,A) \quad \text{$\Rset^n$}.
$$
The existence of $\overline H(P,0,A)$ is given in \cite{LPV}. 
\medskip

We finally prove (5).
Thanks to (1), $\overline H(e_{n+1},A,d)\leq 1+AF(e_{n+1})$.  
We just need to exclude the equality here.  
Assume by contradiction that $\overline H(e_{n+1},A,d)=1+A\max_{\Rset^n}f$. 
 Let $\tilde v$ be a $C^{2,\alpha}$ solution to  equation \eqref{cell}. Then
$$
\left(1-d\,{\rm div}{\frac{D\tilde v}{\sqrt{1+|D\tilde v|^2}}}\right)\sqrt{1+|D\tilde v|^2}\geq 1\quad \text{on $\Rset^n$}. 
$$
Therefore
$$
-\sum_{i,j=1}^{n}\tilde a_{ij}(D\tilde v){\partial ^2\tilde v\over \partial x_i\partial x_j}+\sum_{i=1}^{n}\tilde b_{i}(D\tilde v){\partial \tilde v\over \partial x_i}\geq 0.
$$
Again, we do not write down the explicit forms of $\tilde a_{ij}$ and $\tilde b_i$ as we do not really use them.
Then $\tilde v$ has to be a constant by the strong maximum principle, which gives a contradiction as $f$ is not constant.  
\end{proof}

Now denote by $v_\lambda(x,A)$  the unique solution to \eqref{ge6} with $f$ replaced by $Af$.  In general,  without assumption \eqref{c1},  it could happen that 
$$
-\lim_{\lambda\to 0}\lambda v_{\lambda}(x,A)=\overline H_+(P,d,A)>\overline H(P,d,A),
$$
which also leads to the instability of the set $S_H$.  Fix $n \ge2 $ and $d>0$,
% For convenience, we use $n=3$ and  $p=e_3$ to demonstrate this.  w
we  choose a smooth periodic function  $\Psi(x)$ such that
$$
W(x)=\left(1-d\,{\rm div}{\frac{D\Psi}{\sqrt{1+|D\Psi|^2}}}(x)\right)\sqrt{1+|D\Psi|^2(x)}
$$
changes signs on $\Rset^n$.

\begin{lem}\label{lem:comparison} For $f(x)=-\max\{0, W(x)\}$,  then $F(e_{n+1})=0$ and 
\be\label{oneside}
-\lim_{\lambda\to 0}\lambda v_{\lambda}(x,A)=\overline H_+(e_{n+1},d,1)=0>\overline H(e_{n+1},d,1).
\ee
In particular, this implies that there exists $\tilde A<1$ such that 
\be\label{constant-value}
-\lim_{\lambda\to 0}\lambda v_{\lambda}(x,A)=\overline H_+(e_{n+1},d,A)=0=AF(e_{n+1}) \quad \text{ for $A\in [\tilde A, 1]$}.
\ee
In particular,  $\overline H_+(e_{n+1},d,A)-AF(e_{n+1}) $ is not strictly decreasing with respect to $A$. 
\end{lem}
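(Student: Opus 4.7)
The plan is to read off $\overline{H}_+(e_{n+1},d,1)=0$ directly from the given $\Psi$, then use a linearized comparison between $\Psi$ and the non-cutoff cell solution to rule out $\overline{H}(e_{n+1},d,1)=0$, and finally to propagate this bifurcation to a whole interval $[\tilde A,1]$ via strict monotonicity of $\overline{H}$ in $A$ combined with Corollary~\ref{cor:order}.

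First, $F(e_{n+1})=0$ is immediate since $\max_{\Rset^n}f=0$. By the defining identity for $W$, at every $x$ we have $(1-d\kappa(\Psi))\sqrt{1+|D\Psi|^2}=W(x)$, hence
\[
\left(1-d\operatorname{div}\frac{D\Psi}{\sqrt{1+|D\Psi|^2}}\right)_+\sqrt{1+|D\Psi|^2}=\max\{W(x),0\};
\]
adding $f=-\max\{0,W\}$ yields $0$. Since $\Psi\in C^\infty$ this holds classically at every point, and both the viscosity sub- and supersolution tests follow from the monotonicity of $\kappa$ in $D^2v$ and of $(\cdot)_+$. Thus $\Psi$ is a periodic cell solution of the cutoff equation at $A=1$ with ergodic constant $0$, which gives $-\lim_{\lambda\to 0}\lambda v_\lambda(x,1)=\overline{H}_+(e_{n+1},d,1)=0$. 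Lemma~\ref{lem:formula}(2) then forces $\overline{H}(e_{n+1},d,1)\le 0$.

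To upgrade to strict inequality, assume for contradiction that $\overline{H}(e_{n+1},d,1)=0$ and let $\tilde v\in C^{2,\alpha}$ be a periodic solution of \eqref{cell}. Writing $\phi(v)=\sqrt{1+|Dv|^2}-d\sum_{ij}a_{ij}(Dv)v_{ij}$ with the positive definite $a_{ij}(q)=\delta_{ij}-q_iq_j/(1+|q|^2)$, we have $\phi(\tilde v)=-f$ and $\phi(\Psi)=W$, so
\[
\phi(\tilde v)-\phi(\Psi)=-f-W=\max\{0,-W\}\ge 0.
\]
Linearizing along $v_t=t\tilde v+(1-t)\Psi$ expresses this as $Lw=\max\{0,-W\}$ for $w=\tilde v-\Psi$ and
\[
L=-d\sum_{ij}\left(\int_0^1 a_{ij}(Dv_t)\,dt\right)\partial_{ij}+\sum_i b_i(x)\partial_i,
\]
which is uniformly elliptic on the torus (smoothness and periodicity of $\tilde v,\Psi$ making the averaged coefficient matrix uniformly positive definite) and has no zeroth-order term because $\phi$ is independent of $v$. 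The strong minimum principle applied to $Lw\ge 0$ on the compact torus then forces $w$ to be constant, whence $\phi(\tilde v)=\phi(\Psi)$, i.e., $W=-f=\max\{0,W\}\ge 0$ everywhere. This contradicts that $W$ changes sign, so $\overline{H}(e_{n+1},d,1)<0=\overline{H}_+(e_{n+1},d,1)$, which is \eqref{oneside}.

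For \eqref{constant-value}, Lemma~\ref{lem:prop}(3) gives strict monotonicity of $A\mapsto\overline{H}(e_{n+1},d,A)$ (as $f$ is non-constant), while Lemma~\ref{lem:prop}(2) and the previous step yield $\overline{H}(e_{n+1},d,0)=1>0>\overline{H}(e_{n+1},d,1)$. Continuity then produces a unique $\tilde A\in(0,1)$ with $\overline{H}(e_{n+1},d,\tilde A)=0$, and $\overline{H}(e_{n+1},d,A)\le 0$ on $[\tilde A,1]$. Since $f\le 0$ with $\max f=0$, for $0<A_1<A_2$ the rescaled data satisfy $A_2 f\le A_1 f$ with common maximum $0$, so Corollary~\ref{cor:order} propagates existence of $\overline{H}_+$ from $A=1$ down to every $A\in(0,1]$; Lemma~\ref{lem:formula}(2) then gives $\overline{H}_+(e_{n+1},d,A)=\max\{\overline{H}(e_{n+1},d,A),0\}$, which equals $0$ on $[\tilde A,1]$. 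This establishes \eqref{constant-value} and, since $F(e_{n+1})=0$, shows $\overline{H}_+(e_{n+1},d,A)-AF(e_{n+1})\equiv 0$ on $[\tilde A,1]$, so strict monotonicity fails. The main obstacle throughout is the linearization-and-strong-min-principle step: one must carefully check that the averaged matrix $\int_0^1 a_{ij}(Dv_t)\,dt$ is uniformly positive definite on the torus and that the $v$-independence of $\phi$ genuinely eliminates the zeroth-order term, so that the minimum principle actually forces $w$ to be constant rather than merely bounded below.
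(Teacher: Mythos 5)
Your proof is correct and follows essentially the same route as the paper: read off $\overline H_+(e_{n+1},d,1)=0$ from the smooth cutoff cell solution $\Psi$, deduce $\overline H(e_{n+1},d,1)\le 0$ from the max formula, rule out equality by linearizing the difference of the two cell operators and invoking the strong minimum principle on the torus (exploiting that the operator has no zeroth-order term), and then produce $\tilde A$ from continuity/monotonicity of $\overline H$ in $A$ together with Corollary~\ref{cor:order} to propagate existence. The only added detail is your explicit averaged-coefficient form $\int_0^1 a_{ij}(Dv_t)\,dt$, which is a fine way to justify the uniform ellipticity that the paper leaves implicit.
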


\begin{proof} Since $\max_{\Rset^n}f=0$,  we have $F(e_{n+1})=0$.  The equality $-\lim_{\lambda\to 0}\lambda v_{\lambda}(x,A)=0=\overline H_+(e_{n+1},d,1)$ is obvious  since $\Psi$ is a smooth solution to the cell problem
$$
\left(1-d\,{\rm div}{\frac{D\Psi}{\sqrt{1+|D\Psi|^2}}}\right)_+\sqrt{1+|D\Psi|^2}+f(x)=0 \quad \text{in $\Rset^n$}.
$$
So $\overline H(e_{n+1},d,1)\leq 0$.  Note that
$$
\left(1-d\,{\rm div}{\frac{D\Psi}{\sqrt{1+|D\Psi|^2}}}\right)\sqrt{1+|D\Psi|^2}+f(x)\leq 0 \quad \text{in $\Rset^n$}.
$$
If  $\overline H(e_{n+1},d,1)=0$,  let $u$ be a periodic $C^{2,\alpha}$ solution to 
$$
\left(1-d\,{\rm div}{\frac{Dv}{\sqrt{1+|Dv|^2}}}\right)\sqrt{1+|Dv|^2}+f(x)=0 \quad \text{in $\Rset^n$}.
$$
Then $u-\Psi$  will be a smooth periodic supersolution to a uniformly elliptic equation on $\Rset^n$.   
Strong maximum implies that $u\equiv \Psi$, which contradicts to the fact that  $W(x)$ changes signs.  

 Let  $\tilde A$ be the smallest number such that $\overline H(e_{n+1},d,\tilde A)=0$.  
 Its existence  follows from (2) of Lemma \ref{lem:prop} and the continuity of $\overline H(e_{n+1},d,A)$ with respect to $A$.  
 Obviously, $\tilde A<1$ and $\overline H(e_{n+1},d,\tilde A)<0$ for $A\in (\tilde A, 1]$.  
 Then the equality \eqref{constant-value} follows immediately from  Corollary \ref{cor:order}. 
\end{proof}

\begin{rmk}\label{rmk:smooth} We would like to point out that \eqref{oneside} implies the instability of $S_H$. In fact, by suitable translation, we may assume $\Zset^2\subset\{x\in \Rset^2\,:\,  f(x)=\max_{\Rset^2}f=0\}$. Choose a sequence of smooth functions $\{f_m\}_{m\geq 1}$ satisfying that for $m\geq 1$, $\max_{\Rset^2}f_m=0$,  $\lim_{m\to \infty}\|f_m-f\|_{L^\infty(\Rset^n)}=0$ and
$$
\{x\in \Rset^2\,:\,  f_m(x)=0\}=\Zset^2.
$$
Let $S_H$ and $S_{H,m}$ be sets from Definition \ref{defin:set} corresponding to $f$ and $f_m$ respectively.   Then by item (3) in Theorem \ref{theo:main1},  we have that
$$
\lim_{m\to \infty}S_{H,m}=[0,\tilde A]\subsetneq [0,1]\subset S_H.
$$
In addition,   we can also find a smooth $\tilde f$ such that \eqref{oneside} holds. 
In fact,  let  $\{\tilde f_m\}_{m\geq 1}$ be a sequence of smooth functions satisfying that  for $m\geq 1$, $\max_{\Rset^n}\tilde f_m=0$, $\tilde f_m\geq f$ and $\lim_{m\to \infty}\|\tilde f_m-\tilde f\|_{L^\infty(\Rset^n)}=0$.  Let $v_{\lambda, m}$ be the solution to \eqref{ge6} with $f$ replaced by $\tilde  f_m$.  Owing to Corollary \ref{cor:order}, 
$$
- \lim_{\lambda\to 0}\lambda v_{\lambda,m}(x)=\overline H_{+,m}(e_{n+1},d,1) \quad \text{for  all $x\in \Rset^2$}.
$$
By the continuous dependence of both $\overline H$ and $\overline H_+$ on the function $f$,  when $m$ is large enough, we have that
$$
0=\overline H_{+,m}(e_{n+1},d,1)>\overline H_{m}(e_{n+1},d,1).
$$

\end{rmk}

\begin{rmk}\label{rmk:1d} 
When $n=1$ (two dimensional shear flows),    the limit $-\lim_{\lambda\to 0}\lambda v_{\lambda}(x)=\overline H_+(P,d,A)$ is easy to establish for all $P\in \Rset^2$ and $d, A\geq 0$.
The corresponding  equation \eqref{ge6} is reduced to an ODE
$$
\lambda v_{\lambda} +\left( \sqrt{p_{2}^2+|p_1+ v_{\lambda}^{'}|^2}-{\frac{dp_{2}^2\,  v_{\lambda}^{''}}{{p_{2}^2+|p_1+ v_{\lambda}^{'}|^2}}}\right)_++p_{2}f(x)=0.
$$
Applying the maximum principle to $v_{\lambda}^{'}(x)$ after suitable smooth approximations of $()_+$ if necessary,  we obtain an uniform bound
$$
|v_{\lambda}^{'}(x)|\leq C
$$
for a constant $	C$ independent of $\lambda \in (0,1)$. 
 It was proved in \cite{LXY} that $ \overline H(P,d,A)$ is strictly decreasing with respect to the Markstein number $d>0$ for non-constant $f$.  Thus $ \overline H_+(P,d,A)$ is non-increasing with respect to $d$.  In particular,  due to   (4) in Lemma \ref{lem:prop},  we have that   $\overline H_+(P,d,A)= \overline H(P,d,A)$ when $d$ is small enough.  Hence $\overline H_+(P,d,A)< \overline H_+(P,0,A)$ for all $d>0$. 
\end{rmk}

\section{Appendix: Full homogenization}

Throughout this appendix, we write $x'\in \Rset^n$ and $x=(x',x_{n+1})\in \Rset^{n+1}$.  
For given $\epsilon\in (0,1]$, let $G_{\epsilon}(x,t)
\in C(\Rset^{n+1}\times [0,\infty))$ be the unique viscosity solution to 
\be
\begin{cases}
{\partial G_\epsilon\over \partial t}  + \left(1-\epsilon d\,  {\rm div}\left({DG_\epsilon\over |DG_\epsilon|}\right)\right)_{+}|DG_\epsilon|+V({x\over \epsilon})\cdot DG_\epsilon=0,\label{ge1E}\\
G_{\epsilon}(x,0)=g(x).
\end{cases}
\ee
Here we assume that $g\in {\rm UC}(\Rset^{n+1})$.  
By using $2\arctan (G_{\epsilon})\over \pi$, we may also assume that 
$$
|G_\epsilon|,\ |g|\leq 1
$$
and thus, $g\in {\rm BUC}(\Rset^{n+1})$.   For $(S, P, x)\in S^{n+1}\times \Rset^{n+1}\times \Rset^{n+1}$, write
$$
F_\epsilon(S,P,x)= \left(|P|-d\epsilon\, \mathrm{tr} S+{d\epsilon\, P\cdot S\cdot P\over |P|^2}\right)_{+}+V\left({x\over \epsilon}\right)\cdot P
$$
Here $S^{n+1}$ is the set of $(n+1)\times (n+1)$ symmetric matrices. In this section,  we show how to get the homogenization of the curvature G-equation with shear flows based on the existence of correctors or approximate correctors. 
The proof uses the standard perturbed test function method together with the doubling variables method (\cite{CL1992,Evans}) due to possible lack of smooth correctors,  which is well-known to experts.  For the reader's convenience, we present it here. 

The following boundary control follows immediately from the comparison principle of the curvature G-equation (see  \cite[Theorem 2.1]{GLXY} for instance). 
\begin{lem}\label{lem:boundary}
For any $x_0\in \Rset^{n+1}$ and $\delta>0$, we have that
$$
|G_{\epsilon}(x,t)-g(x_0)|\leq 3\left(\left|{x-x_0\over \delta}\right|^2+l_{\delta,0}\right)^{1/2}+C_\delta t.
$$
Here 
\[
\begin{cases}
l_{\delta,0}=\max_{\bar B_\delta(x_0)}|g(x)-g(x_0)|,\\
C_\delta=\max_{x\in \Rset^{n+1}}\left((1+\max_{\Rset^{n+1}}|V|)|D\Psi(x)|+d(n+1)|D^2\Psi(x)|\right),\\
 \Psi(x)=3(\left|{x-x_0\over \delta}\right|^2+l_{\delta,0})^{1/2}.
\end{cases}
\]
\end{lem}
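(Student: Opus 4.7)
I would prove Lemma \ref{lem:boundary} by a direct comparison argument with the explicit barriers
\[
W^\pm(x,t) := g(x_0) \pm \bigl(\Psi(x) + C_\delta t\bigr).
\]
The strategy is to verify that (i) $W^-(x,0)\leq g(x)\leq W^+(x,0)$ pointwise on $\Rset^{n+1}$, and (ii) $W^+$ is a classical supersolution and $W^-$ a classical subsolution of \eqref{ge1E}. The comparison principle \cite[Theorem 2.1]{GLXY} would then yield $W^-\leq G_\epsilon\leq W^+$, which rearranges into the claimed estimate.

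For (i), if $x\in \bar B_\delta(x_0)$ then $|g(x)-g(x_0)|\leq l_{\delta,0}$; combined with the normalization $|g|\leq 1$ (so $l_{\delta,0}\leq 2$), a short case analysis on whether $l_{\delta,0}\leq 1$ or $l_{\delta,0}>1$ shows $\Psi(x)\geq 3\sqrt{l_{\delta,0}}\geq |g(x)-g(x_0)|$. If $x\notin\bar B_\delta(x_0)$ then $|(x-x_0)/\delta|>1$ forces $\Psi(x)\geq 3\geq |g(x)-g(x_0)|$. For (ii), assume $l_{\delta,0}>0$ so that $\Psi\in C^\infty(\Rset^{n+1})$. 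The cutoff $(\cdot)_+\geq 0$ makes the supersolution inequality for $W^+$ essentially immediate:
\[
F_\epsilon(D^2\Psi, D\Psi, x)\;\geq\; V(x/\epsilon)\cdot D\Psi\;\geq\; -\max_{\Rset^{n+1}}|V|\,|D\Psi(x)|\;\geq\; -C_\delta,
\]
so $\partial_t W^+ + F_\epsilon(D^2 W^+, DW^+, x)\geq C_\delta-C_\delta=0$. For $W^-$, the expression inside the $(\cdot)_+$ in $F_\epsilon(-D^2\Psi, -D\Psi, x)$ equals $|D\Psi|+\epsilon d\,\mathrm{tr}_{(D\Psi)^\perp}(D^2\Psi)$; the crude dimensional bound $|\mathrm{tr}_{(D\Psi)^\perp}(D^2\Psi)|\leq (n+1)|D^2\Psi|$, together with $(a)_+\leq|a|$, $\epsilon\leq 1$, and $|V\cdot D\Psi|\leq \max|V|\,|D\Psi|$, yields $F_\epsilon(-D^2\Psi, -D\Psi, x)\leq C_\delta$, hence $\partial_t W^-+F_\epsilon(D^2 W^-, DW^-, x)\leq -C_\delta+C_\delta=0$.

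The only real subtlety is the cutoff $(\cdot)_+$: it trivializes the supersolution bound for $W^+$ but forces one to work with the upper envelope $(\cdot)_+\leq|\cdot|$ for $W^-$, and to be careful with the dimensional constant coming from the trace along the hyperplane $(D\Psi)^\perp$. The degenerate edge case $l_{\delta,0}=0$ (in which $\Psi$ has a cusp at $x_0$ and the constant $C_\delta$ is formally infinite) is vacuous, since then $g\equiv g(x_0)$ on $\bar B_\delta(x_0)$ and the right-hand side of the estimate is already $\geq |g(x)-g(x_0)|$ at $t=0$ and is infinite for $t>0$; equivalently, one may run the argument with $l_{\delta,0}$ replaced by a small $\eta>0$ whenever finite constants are desired. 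Thus the entire proof reduces to verifying two elementary pointwise inequalities on $\Psi$ and invoking the comparison principle already stated in \cite{GLXY}.
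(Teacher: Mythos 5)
Your barrier argument is exactly the one the paper leaves implicit: the paper's ``proof'' is the single remark that the estimate follows from the comparison principle (citing \cite[Theorem~2.1]{GLXY}), and comparing $G_\epsilon$ with $W^\pm(x,t) = g(x_0)\pm(\Psi(x)+C_\delta t)$ is the standard way to make that precise. The identity inside the $(\cdot)_+$ for $W^-$, the use of $(\cdot)_+\ge 0$ for $W^+$ and $(\cdot)_+\le|\cdot|$ for $W^-$, the trace bound, and the treatment of $l_{\delta,0}=0$ are all fine; note that $\mathrm{tr}_{(D\Psi)^\perp}D^2\Psi$ is a trace over an $n$-dimensional subspace, so the tighter bound $n|D^2\Psi|$ already suffices.

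One small point deserves a sentence: although $\Psi\in C^\infty$ when $l_{\delta,0}>0$, you have $D\Psi(x_0)=0$, so at $x=x_0$ the curvature operator is singular and the classical computation must be replaced by the check against the semicontinuous envelopes of $F_\epsilon$. For $W^+$ this is immediate since $F_{\epsilon,*}\ge 0$ there. For $W^-$, using that $D^2\Psi(x_0)=\frac{3}{\delta^2\sqrt{l_{\delta,0}}}I$ is a positive multiple of the identity, one gets $F_\epsilon^*\bigl(-D^2\Psi(x_0),0,x_0\bigr)=\epsilon d\sup_{|\xi|=1}\bigl(\mathrm{tr}\,D^2\Psi(x_0)-\xi\cdot D^2\Psi(x_0)\cdot\xi\bigr)_+ = \epsilon d\,n\,|D^2\Psi(x_0)|\le C_\delta$, so the subsolution inequality still holds. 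With that remark added, the proof is complete and matches the paper's intended route.
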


Now we assume the shear flow form $V(x)=(0,0,\ldots,0, f(x'))$ and $f:\Rset^n\to \Rset$ is periodic and Lipschitz continuous.  
Suppose that for all $P\in \Rset^{n+1}$, the following limit exists
\be \label{eq:appen}
\lim_{\lambda\to 0}\lambda v_{\lambda}(x')=-\overline H_+(P) \quad \text{for all $x'\in \Rset^n$}.
\ee
Here, $v_\lambda$ is the solution of \eqref{ge6} for each $\lambda>0$.
In particular, this implies the existence of approximate correctors: for any $\delta>0$, there exists  $v=v(x')\in C(\Rset^n)$ which is periodic and satisfies the following equation in viscosity sense in $\mathbb R^n$
\be\label{appro-cell}
\overline H_+(P)-\delta \leq \left(1-d\,  {\rm div}{\frac{p+Dv}{\sqrt{p_{n+1}^2+|p+Dv|^2}}}\right)_+\sqrt{|P+Dv|^2}+p_{n+1}f(x')\leq \overline H_+(P)+\delta .
\ee

Consequently,   the effective equation
\be\label{effective-eq}
\begin{cases}
G_t+\overline H_+(DG)=0\quad \text{in $\Rset^{n+1}\times (0,\infty)$},\\[3mm]
 G(x,0)=g(x).
\end{cases}
\ee
has a unique viscosity solution $G\in C(\Rset^{n+1}\times [0,\infty))$.

\begin{theo}\label{theo:hom}
Assume \eqref{eq:appen}.
Then,
$$
\lim_{\epsilon\to 0}G_{\epsilon}(x,t)=G(x,t)  \quad \text{locally uniformly on $\Rset^{n+1}\times [0, \infty)$}.
$$
Here, $G\in C(\Rset^{n+1}\times [0, \infty))$ is the unique viscosity solution to \eqref{effective-eq}. 

\end{theo}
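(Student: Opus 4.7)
I would follow the Barles--Perthame half-relaxed limits framework combined with Evans' perturbed test function method. Set
\[
\overline{G}(x,t) = \limsup_{(\epsilon,y,s) \to (0^+,x,t)} G_\epsilon(y,s), \qquad \underline{G}(x,t) = \liminf_{(\epsilon,y,s) \to (0^+,x,t)} G_\epsilon(y,s).
\]
Since $|G_\epsilon| \leq 1$ these are bounded functions with $\underline G \leq \overline G$. The effective Hamiltonian $\overline H_+(P)$ is continuous by Lemma \ref{lem:formula}, so \eqref{effective-eq} enjoys the standard comparison principle for first-order Hamilton--Jacobi equations. Hence it suffices to verify (i) $\overline G(\cdot, 0) = \underline G(\cdot, 0) = g$, and (ii) $\overline G$ and $\underline G$ are respectively a viscosity subsolution and supersolution of \eqref{effective-eq}. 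Item (i) follows by applying Lemma \ref{lem:boundary} at each fixed $x_0$ and then sending $\epsilon \to 0$, $(x,t) \to (x_0, 0)$, and finally $\delta \to 0$.

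The core step is the subsolution test for $\overline G$ (the supersolution case is symmetric). Let $\phi \in C^2$ and suppose $\overline G - \phi$ attains a strict local maximum at $(x_0, t_0)$ with $t_0 > 0$. Set $P = D\phi(x_0, t_0)$ and $a = \phi_t(x_0, t_0)$, and argue by contradiction assuming $a + \overline H_+(P) > 4\delta > 0$. Choose a $\Zset^n$-periodic continuous approximate corrector $v = v^\delta$ satisfying \eqref{appro-cell}. Heuristically, one would insert the perturbed test function $\phi^\epsilon(x,t) = \phi(x,t) + \epsilon v(x'/\epsilon)$ in the subsolution inequality for $G_\epsilon$; the algebraic identity
\[
\left(1 - d\, {\rm div}\left(\tfrac{p+Dv}{r}\right)\right)_+ r \;=\; \left(r - d\,\Delta v + d\,\tfrac{(p+Dv)^\top D^2v \,(p+Dv)}{r^2}\right)_+, \qquad r = \sqrt{p_{n+1}^2 + |p+Dv|^2},
\]
shows that the $O(\epsilon^{-1})$ terms produced by the $\epsilon v(x'/\epsilon)$ perturbation match exactly the left-hand side of \eqref{appro-cell}; together with $\phi_t(x_0, t_0) = a$ this would yield $a + \overline H_+(P) \leq \delta + O(\epsilon)$, contradicting the assumption.

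Because $v$ is only continuous, this direct substitution is illegal, so the argument must be implemented through the doubling-of-variables machinery of \cite{CL1992, Evans}. Form
\[
\Phi_{\epsilon,\alpha}(x,y,t) = G_\epsilon(x,t) - \phi(y,t) - \epsilon\, v(x'/\epsilon) - \frac{|x-y|^2}{2\alpha^2}
\]
on a closed neighborhood of $(x_0, x_0, t_0)$; strict maximality and uniform bounds on $G_\epsilon$ force the maximizers $(x_\epsilon, y_\epsilon, t_\epsilon)$ to converge to $(x_0, x_0, t_0)$ with $|x_\epsilon - y_\epsilon|^2/\alpha^2 \to 0$ in an appropriate joint limit. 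Testing $G_\epsilon$ as a viscosity subsolution of \eqref{ge1E} against the smooth components, and simultaneously exploiting the viscosity subsolution property of $v$ in \eqref{appro-cell} at the rescaled point $x_\epsilon'/\epsilon$ via the parabolic Ishii lemma, produces matching matrix bounds such that the singular $\epsilon^{-1}$ contributions from the Hessian of $v(x'/\epsilon)$ cancel precisely against the mean-curvature term; what remains, after sending $\alpha \to 0$ and then $\epsilon \to 0$, is $a + \overline H_+(P) \leq \delta + o_\epsilon(1) + o_\alpha(1)$, the desired contradiction. The main technical obstacle is ensuring that the $(\cdot)_+$ cutoff and the degeneracy of $F_\epsilon$ at zero gradient do not spoil the cancellation; this is handled by restricting to test functions with $D\phi(x_0, t_0) \neq 0$ (the case $P = 0$ giving $a \leq 0$ directly from positive homogeneity of $\overline H_+$ and Lemma \ref{lem:boundary}), on which set $F_\epsilon(S, P, x)$ is jointly continuous and the standard Ishii-lemma machinery applies verbatim.
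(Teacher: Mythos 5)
Your overall skeleton matches the paper's: half-relaxed limits, Evans' perturbed test function method implemented through doubling of variables and Ishii's lemma, comparison for the effective equation, and the initial condition via Lemma~\ref{lem:boundary}. But there are concrete gaps in the execution.

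First, your doubling functional is set up incorrectly. You wrote
\[
\Phi_{\epsilon,\alpha}(x,y,t) = G_\epsilon(x,t) - \phi(y,t) - \epsilon\, v(x'/\epsilon) - \frac{|x-y|^2}{2\alpha^2},
\]
which places both $G_\epsilon$ and the corrector $v$ in the same variable $x$. The entire point of doubling here is that \emph{neither} $G_\epsilon$ nor $v$ is smooth, so they must live in separate variables so that each can be tested against the smooth penalization $\phi(y,t)+|x-y|^2/(2\alpha^2)$ in a viscosity sense and Ishii's lemma can produce matching second-order jets. With both in $x$, you cannot invoke Ishii's lemma nor justify the cancellation of the $\epsilon^{-1}$ Hessian contributions that you claim. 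The paper's setup puts $G_\epsilon$ (and the smooth $\phi$) in the $x$-variable and $\epsilon v(y'/\epsilon)$ in the $y$-variable, via $w_1(x,t)=G_\epsilon(x,t)-\phi(x,t)+P\cdot x-\overline H_+(P)t$, $w_2(y,t)=P\cdot y-\overline H_+(P)t+\epsilon v(y'/\epsilon)$, and $w=w_1-w_2-|x-y|^2/(2\alpha)$, so that $w_1$ is a subsolution, $w_2$ a supersolution, and (8.10) of \cite{CL1992} applies.

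Second, your order of limits ``$\alpha\to 0$ and then $\epsilon\to 0$'' does not close the estimate. The residual after Ishii's lemma contains both a term of size $\epsilon/\alpha$ (from the matrix bound applied to the gradient-direction error) and a term of size $\alpha/\epsilon$ (from the penalization against the equation's degeneracy), together with a factor $\hat E = |D\phi(\hat x,\hat t)-P|$ that only vanishes as both parameters go to zero. Sending $\alpha\to 0$ at fixed $\epsilon$ blows up the first, and sending $\epsilon\to 0$ at fixed $\alpha$ blows up the second. The paper handles this by coupling $\alpha=\epsilon/K$ and sending $\epsilon\to 0$ first, then $K\to\infty$; some such coupling is unavoidable and your plan does not supply one.

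Third, your treatment of degeneracy is incomplete. You only set aside the case $P=0$ and appeal to positive homogeneity, but the genuinely separate case in this problem is $p_{n+1}=0$ with $p\neq 0$: then the corrector trivializes ($v\equiv 0$), $\overline H_+(P)=|p|$, and the perturbed test function argument degenerates because the cell problem is not even of the form \eqref{ge6} (the oscillatory drift $p_{n+1}f(x')$ disappears). The paper treats this as a distinct Case 2 with a direct, corrector-free argument. Your restriction to $D\phi(x_0,t_0)\neq 0$ does not cover it.

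These are fixable, and the identification of the tools is right, but as written the argument does not go through.
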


\begin{proof}
 For $(x,t)\in \Rset^{n+1}\times (0,\infty)$, define
$$
\overline{G}(x,t)=\limsup_{\substack{\epsilon\to 0\\  y\to x,\, s\to t}}G_{\epsilon}(y,s)
$$
and
$$
{\underline G}(x,t)=\liminf_{\substack{\epsilon\to 0\\  y\to x,\, s\to t}}G_{\epsilon}(y,s).
$$
Clearly,  $\overline{G}$  is upper semicontinuous and $\underline G$ is lower semicontinuous and 
$$
-1\leq   \underline G\leq \overline{G}\leq 1.
$$

\medskip

\noindent {\bf Step 1.}  We show that $\overline{G}$ is a viscosity subsolution of the effective equation.  
Let us  argue by contradiction. 
If not, then there exists $(\bar x, \bar t)\in \Rset^{n+1}\times (0,\infty)$ and a smooth function $\phi(x,t)\in C^{\infty}(\Rset^{n+1}\times (0,\infty))$ such that 
$$
0=\overline{G}(\bar x, \bar t)-\phi(\bar x, \bar t)>\overline{G}(x,t)-\phi (x,t) \quad \text{for $(x,t)\not= (\bar x, \bar t)$}
$$
and
\be\label{positive1}
\phi_t(\bar x,\bar t)+\overline H_+(D\phi(\bar x,\bar t))=2\tau>0.
\ee
For $P=D\phi(\bar x, \bar t)=(p,p_{n+1})$,  let $v=v(x')\in C(\Rset^n)$ be a periodic continuous function satisfying equation \eqref{appro-cell} in the viscosity sense for $\delta={\tau\over 2}$.

{\it Case 1.} $p_{n+1}\not=0$.  For $\alpha>0$, set
$$
w(x,y,t)=w_1(x,t)-w_2(y,t)-{|x-y|^2\over 2\alpha}.
$$
Here $w_1(x,t)=G_\epsilon(x,t)-\phi(x,t)+P\cdot x-\overline H_+(P)t$ and $w_2(y,t)=P\cdot y-\overline H_+(P)t+\epsilon v({y'\over \epsilon})$. Note that 
$$
w(x,x, t)=G_{\epsilon}(x,t)-\phi(x,t)-\epsilon v\left({x'\over \epsilon}\right),
$$
which is the standard form of the perturbative test function from \cite{Evans}.  Apparently, for fixed $r>0$,  when $\epsilon$ and $\alpha$ are small enough,  $w$ attains a local maximum point at $(\hat x,\hat y,\hat t)\in B_r=B_r(\bar x)\times B_r(\bar x)\times B_r(\bar t)$
$$
w(\hat x,\hat y,\hat t)=\max_{\bar B_r} w(x,y,t).
$$
 Clearly, $\lim_{\epsilon,\delta\to 0}(\hat x,\hat y,\hat t)=(\bar x, \bar x, \bar t)$. We may choose $\epsilon$ and $\delta$ small enough such that 
$$
\phi(\hat x,\hat t)+\overline H_+(D\phi(\hat x,\hat t))\geq \tau={1\over 2}(\phi(\bar x,\bar t)+\overline H_+(P))>0.
$$

Owing to  (8.10) in the users' guide \cite{CL1992}, there are two $(n+1)\times (n+1)$ symmetric  matrices  $X$  and $Y$ and a real number $\hat a$ such that 
\be\label{control}
X\leq Y \quad \mathrm{and} \quad |q_1\cdot X\cdot q_1-q_2\cdot Y\cdot q_2|\leq {3\over \alpha}|q_1-q_2|^2
\ee
for all $q_1, q_2\in \Rset^{n+1}$ and
$$
(\hat a, \hat q,  X)\in { \mathcal {\bar P}}_{\mathcal {O}}^{2,+}w_1(\hat x, \hat t) \quad  \mathrm{and} \quad (\hat a, \hat q,  Y)\in { \mathcal {\bar P}}_{\mathcal {O}}^{2,-}w_2(\hat x, \hat t) 
$$
for
$$
\hat q={\hat x-\hat y\over \alpha}. 
$$
Then
$$
(\hat a+\phi_t(\hat x,\hat t)+\overline H(P), \ \hat q+D\phi(\hat x, \hat t)-P, \ X+D^2\phi(\hat x,\hat t))\in  { \mathcal {\bar P}}_{\mathcal {O}}^{2,+}G_{\epsilon}(\hat x, \hat t).
$$
See Section 8 in  \cite{CL1992} for the definition of corresponding notations. 
Then $\hat q\cdot e_{n+1}=p_{n+1}$. 
 In particular, $|\hat q|\geq |p_{n+1}|=\theta>0$. 
 Hereafter $C$ represents a constant that depending only on $\phi$, $u$ and $\theta$.  
 Moreover, using
$w(\hat x, \hat x, \hat t)\leq w(\hat x, \hat y, \hat t)$, we derive that
$$
{|\hat x-\hat y|^2\over 2\alpha}\leq |w_2(\hat x, \hat t)-w_2(\hat y, \hat t)|\leq C(|\hat x-\hat y|+\epsilon).
$$
Accordingly, 
\be\label{q-bound}
|\hat q|\leq C\left (1+\sqrt{\epsilon\over \alpha}\right).
\ee

 Note that for the function $S(v)={v\over |v|}$, 
\be\label{gradient-control}
|DS(v)|\leq {1\over |v|}
\ee
Then when $\epsilon$ and $\alpha$ are small enough,
\be\label{gradient-difference}
|S(\hat q)-S(\hat q+D\phi(\hat x, \hat t)-P)|\leq {C\hat E\over |\hat q|+\theta}, 
\ee
where $\hat E=|D\phi(\hat x, \hat t)-P|$.  Obviously, $\lim_{\epsilon\to 0, \alpha\to 0}\hat E=0$.   Plugging into  equation \eqref{ge1E} in viscosity sense gives
$$
\hat a+\phi_t(\hat x,\hat t)+\overline H(P)+F_\epsilon(X+D^2\phi(\hat x,\hat t), \hat q+D\phi(\hat x, \hat t)-P, {\hat x})\leq 0
$$
and by \eqref{appro-cell} in viscosity sense, 
$$
\hat a+F_\epsilon(Y, \hat q, {\hat y})\geq -{\tau\over 2}.
$$
Accordingly, 
$$
\hat a+F_\epsilon(X+D^2\phi(\hat x,\hat t), \hat q+D\phi(\hat x, \hat t)-P, {\hat x})\leq -{\tau}
$$
Taking the difference of the above two equations,  thanks to \eqref{control},  \eqref{gradient-control} and \eqref{gradient-difference}, we derive that
$$
C\left(\epsilon+{3\epsilon\over \alpha}{\hat E^2\over |\hat q|^2+\theta^2}+{\alpha |\hat q|\over \epsilon}\right)\geq {\tau\over 2}.
$$
Then
$$
C\left(\epsilon+{3\epsilon\over \alpha}{\hat E^2\over \theta^2}+{\alpha |\hat q|\over \epsilon}\right)\geq {\tau\over 2}
$$
Combining with \eqref{q-bound}, 
$$
C\left(\epsilon+{3\epsilon\over \alpha}{\hat E^2\over \theta^2}+{\alpha\over \epsilon}+\sqrt{{\alpha\over \epsilon}}\right)\geq {\tau\over 2}.
$$
Fix $K\in \Nset$ and let $\alpha={\epsilon\over K}$. Then
$$
C\left(\epsilon+{3K\hat E^2\over \theta^2}+{1\over K}+\sqrt{{1\over K}}\right)\geq {\tau\over 2}.
$$

 Sending $\epsilon\to 0$ leads to 
$$
C\left({1\over K}+\sqrt{{1\over K}}\right)\geq {\tau\over 2}.
$$
which is impossible when $K$ is large enough. 

\medskip

{\it Case 2.} $p_{n+1}=0$.  Then $\overline H_+(P)=|p|$ and $v=0$.  For fixed $r$, choose $\epsilon$ small enough, such that
$$
G_\epsilon(x,t)-\phi(x,t)
$$
attains local maximum at some point $(\hat x, \hat t)\in B_r(\bar x,\bar t)$.  Plugging into equation \eqref{ge1E} in viscosity sense, we have that
$$
\phi_t(\hat x,\hat t)+|p|-C(\epsilon+|\hat x-\bar x|+|\hat t-\bar t|)\leq 0.
$$
Sending $\epsilon$, $\delta\to 0$ leads to 
$$
\phi_t(\bar x, \bar t)+\overline H_+(P)=\phi_t(\bar x, \bar t)+|p|\leq 0,
$$
which contradicts \eqref{positive1}. 

\medskip

\noindent {\bf Step 2.}  Similarly, we can show that $\underline G$ is a viscosity supersolution of the effective equation.  Hence by the comparison principle
$$
\underline G\geq \overline{G}.
$$
Therefore,  $\underline G=\overline{G}=G$.  
Besides, by Lemma \ref{lem:boundary}, 
$$
\lim_{t\to 0}G(x,t)=g(x) \quad \text{locally uniformly on $\Rset^{n+1}$}.
$$
Therefore, $G$ is the unique solution of the effective equation \eqref{effective-eq}.
\end{proof}

\section{Acknowledgement}
We would like to thank the anonymous referee for carefully reading our paper and providing  very helpful suggestions to improve its presentation.

\section*{Data availability}
Data sharing not applicable to this article as no datasets were generated or analyzed during the current study.

\section*{Conflict of interest}
There is no conflict of interest.

\bibliographystyle{plain}

\end{document}